\documentclass[10pt]{article}

\usepackage{amsmath, amsthm, amsfonts,amssymb}
\usepackage{graphicx}
\usepackage{colortbl}
\usepackage[utf8]{inputenc}

\usepackage[colorlinks=true,linkcolor=blue,citecolor=blue,urlcolor=blue,breaklinks]{hyperref}

\addtolength{\oddsidemargin}{-.875in}
\addtolength{\evensidemargin}{-.875in}
\addtolength{\textwidth}{1.75in}

\addtolength{\topmargin}{-.875in}
\addtolength{\textheight}{1.75in}



\def\R{\mathbb{R}}

\def\NN{\mathbb{N}}

\def\ird{\int_{\mathbb{R}^N}}

\def\d{\,{d}}
\def\dx{\,{d}x}
\def\dy{\,{d}y}
\def\dz{\,{d}z}
\def\dv{\,{d}v}
\def\dw{\,{d}w}


\newtheorem{thm}{Theorem}[section]

\newtheorem{lem}[thm]{Lemma}

\theoremstyle{definition}

\theoremstyle{remark}
\newtheorem{remark}[thm]{Remark}
\theoremstyle{example}

\author{Alethea B. T. Barbaro\footnote{Department of Mathematics,
    Applied Mathematics \& Statistics, Case Western Reserve
    University, 10900 Euclid Avenue, Yost Hall, Cleveland, Ohio
    44106-7058,
    USA. \texttt{\href{mailto:alethea.barbaro@case.edu}{alethea.barbaro@case.edu}}
  }
  \and
  Jos\'e
  A. Ca\~nizo\footnote{Departamento de Matem\'atica Aplicada,
    Universidad de Granada, Facultad de Ciencias, Campus de
    Fuentenueva, 18071 Granada,
    Spain. \texttt{\href{mailto:canizo@ugr.es}{canizo@ugr.es}}}
  \and
  Jos\'e
  A. Carrillo\footnote{Department of Mathematics, Imperial College
    London, London SW7 2AZ, United Kingdom.
    \texttt{\href{mailto:carrillo@imperial.ac.uk}{carrillo@imperial.ac.uk}}
  }
  \and
  Pierre Degond\footnote{Department of Mathematics, Imperial
    College London, London SW7 2AZ, United
    Kingdom.
    \texttt{\href{mailto:pdegond@imperial.ac.uk}{pdegond@imperial.ac.uk}}
  }}

\title{Phase Transitions in a kinetic flocking model of Cucker-Smale type}
\date{October 10, 2015}

\makeindex

\begin{document}

\maketitle

\begin{abstract}
  We consider a collective behavior model in which individuals try to
  imitate each others' velocity and have a preferred speed. We show
  that a phase change phenomenon takes place as diffusion decreases,
  bringing the system from a ``disordered'' to an ``ordered''
  state. This effect is related to recently noticed phenomena for the
  diffusive Vicsek model. We also carry out numerical simulations of
  the system and give further details on the phase transition.
\end{abstract}

\tableofcontents

\section{Introduction}
\label{S:intro}

In many biological systems made of a large number of individuals such
as cell populations \cite{Yamao_etal_PLOSone11}, insect colonies
\cite{Buhl_etal_Science2006} or vertebrate groups
\cite{Katz_etal_PNAS11}, agents are known to strongly interact with
each other.  Such social forces trigger the emergence of collective
dynamics, where all individuals are moving coherently.  Furthermore,
it has been observed that such collective dynamics necessitate
specific circumstances, such as e.g. a large density of individuals
\cite{Buhl_etal_Science2006} while the same system exhibits
disorganized dynamics when these circumstances are not met.  Hence,
when considering any sort of self-organized system, the question of
how the model switches between disorganized and collective behavior
becomes of tantamount importance.

Models of collective behavior abound in the literature, at the
particle
\cite{Aoki,bebsvps2009,Couzin_etal_JTB02,DCBC,gc2004,vcbcs1995},
kinetic
\cite{Bertin_etal_PRE06,bcc2011,ccr2011,CFTV,Fornasier_etal_PhysicaD11}
and hydrodynamic levels
\cite{Bertin_etal_JPA09,dm2008,Toner_Tu_PRL95}. Among these models,
the Vicsek model (VM) \cite{bcc2012,dm2008,vcbcs1995} is one of the
simplest models exhibiting phase transitions between disordered to
collective dynamics. In this model, particles moving with constant
speed interact with their neighbors through local alignment and are
subject to noise. Another extremely successful model is the
Cucker-Smale model (CSM)
\cite{Carrillo_etal_SIMA10,Cucker_Smale_IEEE07,Ha_Liu_CMS09,Ha_Tadmor_KRM08}. In
this model, particles can take all possible speeds and interact
through local velocity consensus in a deterministic way, although
noisy versions have also been considered
\cite{bcc2011,Cucker_Mordecki_JMPA08}.

In its original version, the CSM does not feature any
self-propulsion. Particles move just because of inertia. In
particular, if the initial total momentum of the particle system is
equal to zero and in the absence of noise, particles become still in
the large time limit. By constraining velocities to belong to a
sphere, the VM exhibits a significantly different behavior, with phase
transitions from disordered to collective states made possible
\cite{vcbcs1995}. This is why it is natural to equip the CSM with
self-propulsion as proposed e.g. in \cite{bd2012, bcc2011} and to
examine if such an augmented CSM exhibits similar phase transitions as
the VM. Note that when the strength of the self-propulsion term is let
to infinity, the noisy, self-propelled CSM converges to the VM as
proven in \cite{bostan2013asymptotic}. Therefore, one may guess that
this augmented CSM features phase transitions in at least some range
of parameters. It is the goal of the present paper to prove this fact.

Phase transitions for the kinetic VM have been extensively studied in
\cite{dfl2013, dfl2015,frouvelle2012dynamics} in the spatially
homogeneous case. Remember that the velocities are normalized, such
that the velocity distribution is a probability distribution on the
sphere. Supposing that the density is scaled to unity, then it is
shown that there is a critical noise value $D_c$ of the noise
intensity $D$ such that for $D>D_c$, only isotropic stationary
distributions exist and are stable. By contrast, when $D<D_c$,
isotropic stationary distributions become unstable and a family of
non-isotropic equilibria parametrized by a unit vector $\Omega$ on the
sphere emerge and are stable. These equilibria are given by von
Mises-Fisher distributions which are substitutes for the Gaussian
distribution for probabilities defined on the sphere
\cite{Watson_JAP82}. Note that \cite{dfl2013,
  dfl2015,frouvelle2012dynamics} provide fully nonlinear (in)stability
results which are obtained by taking advantage of the variational
structure of the kinetic VM induced by a conveniently defined free
energy functional.

Now considering the noisy, self-propelled CSM, the first result
towards the emergence of a phase transition is given in
\cite{bd2012}. In this model, a subterfuge is used by considering a
time-scale separation. For this purpose, the force acting on the
particle is decomposed into the self-propulsion force on the one hand,
and the force deduced from the combination of alignment with the
neighbors and noise on the other hand. Each of these two forces is
scaled with parameters $\nu$ and $\mu$ respectively. Now, considering
again a spatially homogeneous situation for simplicity, \cite{bd2012}
considers the limit $\mu \to \infty$ keeping $\nu$ fixed (i.e. when
the alignment+noise contribution of the force is large compared to the
self-propulsion force). In this limit, under a convenient time
rescaling, the velocity distribution is shown to converge to a
Maxwellian distribution whose mean velocity obeys an ordinary
differential equation modelling the action of the self-propulsion
force. Again, the number of equilibria that this ODE possesses depends
on the noise intensity. Consistently with what was found for the VM,
there again exists a threshold noise $D_c$ above which the only
equilibrium mean-velocity is $0$ and is stable, showing that no
collective motion emerges. By contrast, for values of the noise
intensity below $D_c$, the zero mean-velocity equilibrium becomes
unstable and a whole sphere of stable equilibria for the mean velocity
emerges. The time-scale separation allows for an analytic
determination of the equilibria of the interaction term (here
alignment+noise), which turns out to be the Maxwellian. In particular,
in the stationary states the mean velocity of this Maxwellian can be
explicitly computed as a function of the noise. Without this scale
separation hypothesis, the equilibria are given by a more complicated
formula and their mean velocity is not explicitly known but is rather
a solution of a nonlinear equation, making the analysis more
difficult. The task tackled here is to provide a rigorous analysis of
this equation.

In this work, we consider a noisy, self-propelled CSM.  In this model,
$f$ is the distribution in both space $x$ and velocity $v$ at time
$t$, and the model features a CSM term which aligns the velocity of
individuals nearby in space, a term adding noise in the velocity, and
a friction term which relaxes velocities back to norm one:
$$
\partial_t f + v \nabla_x f = \nabla_v \cdot \left( \alpha (|v|^2-1)v f + (v-u_f) f + D \nabla_v f \right)\,.
$$
where
$$  u_f (t,x) = \frac{\int K(x,y) v f(t,y,v) \dv \dy}{\int K(x,y) f(t,y,v) \dv \dy}.
$$
Here $K(x,y)$ is a suitably defined localization kernel and $\alpha$
and $D$ are respectively the self-propulsion force and noise
intensities. We have chosen scales such that the alignment force
(modelled by the term $(v-u_f) f$) has intensity equal to $1$.  In
this work, we focus on the spatially homogeneous case, where the model
reduces to
\begin{align}\label{E:mainPDE}
\partial_t f &= \nabla_v \cdot \left( \alpha (|v|^2-1)v f + (v-u_f) f + D \nabla_v f \right)\,.
\end{align}
where
\begin{equation} \label{E:u_f-intro}
  u_f (t) = \frac{\int v f(t,v) \dv}{\int f(t,v) \dv},
\end{equation}
and where $f=f(t,v)$ is the velocity distribution at time $t$.
Precisely, the goal of this work is to show that there is a phase
transition between unpolarized and polarized motion as the noise
intensity $D$ is varied, for a specific range of the values of
$\alpha$.  Therefore we achieve the goal of proving that the noisy
self-propelled CSM behaves like the VM when the self-propulsion speed
is large enough.

Note that this problem has already been studied in dimension
1. Specifically, the existence of a transition between one and three
stationary states has been proven in a one-dimensional setting by
Tugaut in \cite{Tugaut2013Convergence, Tugaut2013Phase,
  Tugaut2013Selfstabilizing,Tugaut2014Selfstabilizing}. Here, we are
interested in an arbitrary number of dimensions (practically $2$ or
$3$) and we numerically demonstrate that there is indeed a phase
transition by verifying that the stability of the isotropic equilibria
changes as $D$ crosses a threshold value.  In addition, we
analytically prove that for large noise $D$ there is only one
isotropic stationary solution, while for small $D$ there is an
additional infinite family of stationary states parametrized by a unit
vector on the sphere, below referred to as the polarized
equilibria. Therefore, although no analytic formula for the mean
velocity of the polarized equilibria of the CSM exist, we know that at
least this family of equilibria forms a manifold of the same dimension
as the polarized equilibria of the VM. We remind that, in the VM case,
the polarized equilibria were given by Von Mises-Fisher distributions
about an arbitrary mean orientation $\Omega$ belonging to the unit
sphere. Here, these polarized equilibria are still parametrized by a
vector $\Omega$ of the unit sphere, but the precise expression of the
mean velocity is not analytically known.

The paper is organized as follows.  In Section
\ref{S:homogeneousProblem}, we introduce the homogeneous problem and
discuss its steady states.  We then focus on multiple dimensions and
postulate that there are two regions of the parameter space, each with
a different number of possible stationary solutions.  We proceed to
show that for small $D$, there is a manifold of equilibria
parametrized by a vector on the unit sphere, while for large $D$,
there can be only one. In section \ref{S:numerics}, we numerically
validate the results of Section \ref{S:homogeneousProblem} and explore
the influence of the parameter $\alpha$ and the stability of
stationary states. We conclude and discuss future work in section
\ref{S:conclusion}. Some technical details are provided in Appendix
\ref{sec:appendix}.


\section{Phase Transition: Stationary States for the Homogeneous
  Case}
\label{S:homogeneousProblem}

In this section we focus on probability density solutions to the
spatially homogeneous kinetic model associated to \eqref{E:mainPDE},
that reads
\begin{equation}\label{E:main}
\partial_t f = \nabla_v \cdot \left( \alpha v(|v|^2-1)f + (v-u_f) f \right) + D \Delta_v f\,,
\end{equation}
with $t\geq 0$ and $v\in \R^N$. Here, the mean velocity $u_f$ is given by
\begin{equation} \label{E:u_f}
  u_f (t) = \int_{\R^N} v f(t,v) \dv\,.
\end{equation}
The second term on the right-hand side of \eqref{E:main} accounts for the tendency to align with the local velocity field, while the last term adds noise into the velocity component. The first term enforces the tendency to travel with unit speed. The kinetic equation satisfies the conservation of mass 
$$
\int_{\R^N} f(t,v) \dv =1\,,
$$
for all $t\geq 0$. This equation lies in the general class of PDEs
having a gradient flow structure, see
\cite{MR2053570}, by writing the equation as
$$
\partial_t f = \nabla_v \cdot \left( f\nabla_v \xi \right) \qquad \mbox{with } \xi=\Phi(v)+W\ast f + D \log f\,.
$$
Here, particles are thought to move under the effect of a confining potential given by 
$$
\Phi(v)=\alpha \left(\tfrac{|v|^4}{4} -\tfrac{|v|^2}{2}\right)\,,
$$ 
an interaction potential of the form $W(v)=\tfrac{|v|^2}2$, and with linear diffusion. The equation \eqref{E:main} is then seen as a continuity equation with a velocity field of the form $-\nabla_v \xi$, and thus there is a natural entropy for this equation given by the free energy of the system
\begin{align}
\mathcal{F}[f]:= &\,\int_{\R^N} \Phi(v) f(t,v) \dv+ \frac12
                   \int_{\R^N} \int_{\R^N} W(v-w) f(t,v) f(t,w) \dw \dv + D \int_{\R^N}  f \log f(v) \dv\nonumber \\
=&\,\int_{\R^N} \left(\alpha \tfrac{|v|^4}{4} +(1-\alpha) \tfrac{|v|^2}{2}\right) f(t,v) \dv - \frac12 |u_f|^2 + D \int_{\R^N}  f \log f(v) \dv\,,\label{freeen}
\end{align}
since $\xi = \frac{\delta \mathcal{F}}{\delta f}$. The second expression follows by expanding the square in the interaction potential. Actually,  the dissipation of the free energy $\mathcal{F}[f]$ in \eqref{freeen} along solutions is given by
$$
-\frac{d\mathcal{F}[f(t)]}{dt} = D[f(t)] := \int_{\R^N} \left|\nabla_v \xi\right|^2 f(t,v) \dv\,.
$$

We will look for stationary solutions $f(v)>0$ to \eqref{E:main} for
$u_f=\bar u$. Taking into account the dissipation property, they
should satisfy $\nabla_v \xi=0$, or equivalently
$\xi = \text{constant}$. Thus, stationary solutions are of the form
\begin{equation}
  \label{stateq}
  f_{\bar u}(v)
  = \frac{1}{Z}
  \exp\left( -\tfrac{1}{D}
    \left[ \alpha \tfrac{|v|^4}{4} +(1-\alpha) \tfrac{|v|^2}{2}
      - \bar u \cdot v
    \right]
  \right),
\end{equation}
with $Z$ the normalization factor such that $f$ has unit
mass. Therefore, the set of stationary solutions of \eqref{E:main} can
be parametrized by the set of mean velocities $\bar u \in \R^N$ such
that
\begin{equation}
  \label{defcalH}
  \mathcal{H}({\bar{u}},D) := \int_{\R^N} (v-{\bar{u}})f_{\bar u}(v) dv =0.
\end{equation}
Notice that by \eqref{defcalH}, $\mathcal{H}({\bar{u}},D)=0$ is
equivalent to $\bar u=u_{f_{\bar u}}$ given by \eqref{E:u_f}. Let us
point out that $\bar u=0$ is always a solution corresponding to
radially symmetric stationary states.

\begin{figure}[ht]
\begin{center}
\includegraphics[width=16cm]{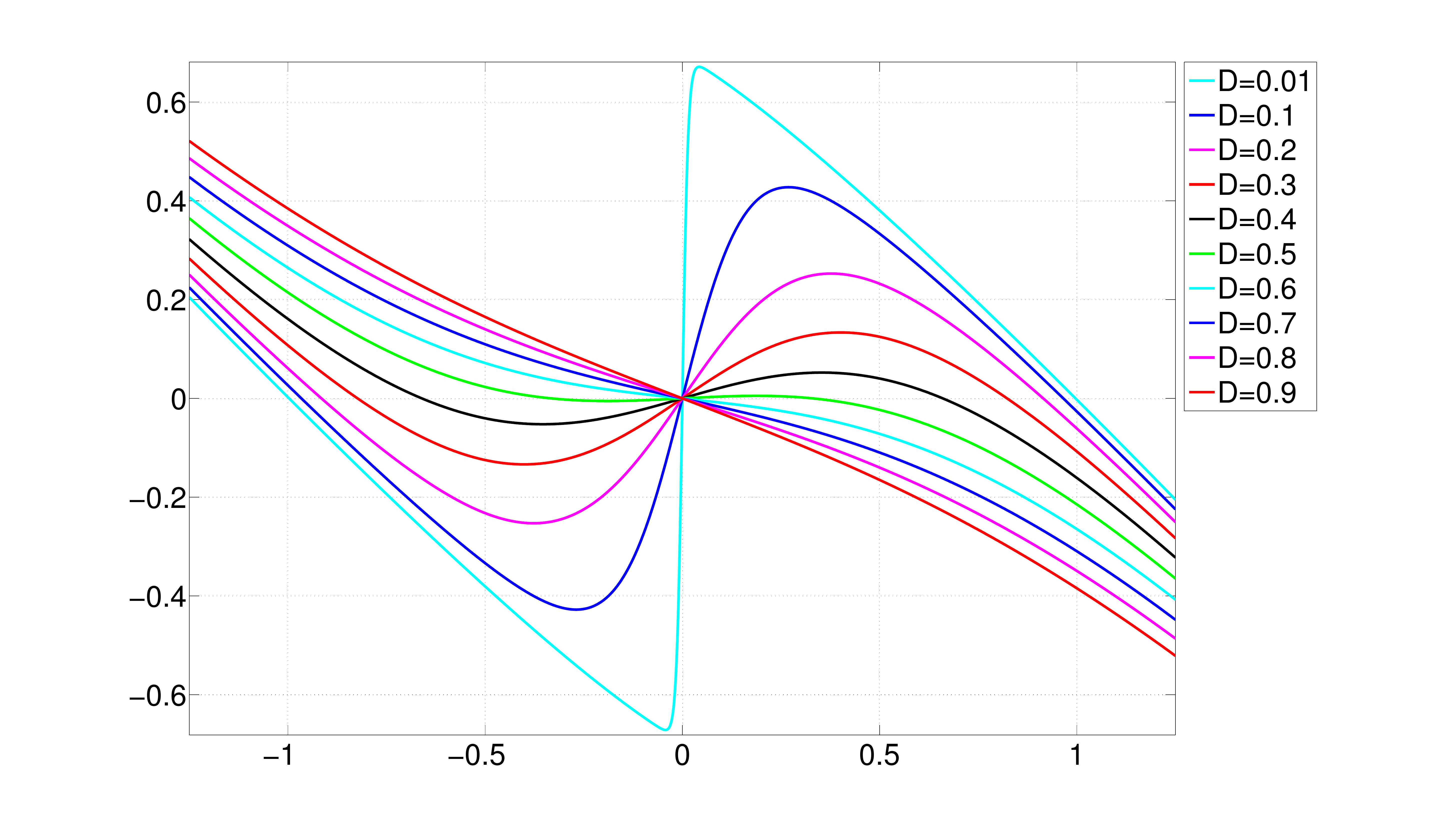}
\end{center}
\caption{Plot of the one-dimensional $H(u,D)$ against $u$ for $\alpha = 2$ and varying values of $D$.  From the figure, it is apparent that the sign of $\frac{\partial H}{\partial u}(0)$ shifts from negative to positive as $D$ increases.}
\label{F:Hofu}
\end{figure}

By choosing the axis, we may assume without loss of generality that
$\bar u$ points in the direction of the first axis or first vector
$e_1$ of the canonical basis, and then let us denote the magnitude of
$\bar u$ by $u\geq 0$. The full set of stationary solutions is
obtained by composing $f_{\bar u}$ with any rotation in $\R^N$, and
thus yields an $(N-1)$-dimensional family of stationary solutions for
each $\bar u=u e_1$ satisfying \eqref{defcalH}. Noticing that all
components of $\mathcal{H}$ except for the first one vanish due to
$f_{\bar u}(v)$ being odd in $v_2, \dots, v_N$, we can restrict our
attention to the first component of $\mathcal{H}$. For the sake of
simplicity we will denote by $f$ the probability density given by
\eqref{stateq} associated to the vector $\bar u=u e_1$, and the real
valued function whose roots have to be analyzed is the first component
of $\mathcal{H}$, given by
\begin{equation}
  \label{defH}
  H(u,D) = \int_{\R^N} (v_1-u)f(v) dv
  = \frac1{Z} \int_{\R^N} (v_1-u) \exp\left\{-\frac{P_u(v)}{D}\right\} \dv\,,
\end{equation}
with
\begin{align}\label{poly}
  P_u(v) &= -\alpha \left( \tfrac{|v|^2}{2} - \tfrac{|v|^4}{4} \right)
           + \tfrac{|v|^2}{2} - uv_1\, .
\end{align}
In figure \ref{F:Hofu}, we plot $H(u,D)$ in one dimension as a
function of $u$ for varying values of $D$.  It is clear from the
figure that for small values of $D$, $H(u,D)$ has three roots, the
zero root and two roots with identical speed; while for large values
of $D$ the only root is $u=0$, and this can be deduced from the sign
of $\frac{\partial H}{\partial u}(0,D)$.  We will show analytically
that this is the case in the next subsections and it will be further
studied numerically in section \ref{S:numerics}.

Our goal is to show that, for given $\alpha>0$, as we vary the noise
strength $D$, there is a region of the parameter space with only one
possible solution, namely $u=0$, and a region with at least two roots,
$u=0$ and $u=u_{\alpha, D} > 0$.  In fact, we expect to have the
unique homogeneous stationary state for large noise corresponding to a
disordered state while for small noise we expect to have a nontrivial
biased solution. The objective of the next two subsections is to show
these facts for small and large noise.

The main theorem of this section can be summarized as follows:

\begin{thm}{\bf (Phase Transition Driven by Noise)}\label{main} 
  The nonlinear Fokker-Planck equation \eqref{E:main} exhibits a phase transition in the following
  sense:
  \begin{enumerate}
  \item For small enough diffusion coefficient $D$ there is a function
    $u = u(D)$ with $$\lim_{D\to 0}u(D) = 1,$$ such that $f_{\bar{u}}$ given
    by \eqref{stateq} with $\bar{u} = (u(D),0,\dots,0)$ is a
    stationary solution of \eqref{E:main}.
    
  \item For large enough diffusion coefficient $D$ the only stationary
    solution of \eqref{E:main} is the symmetric distribution given by
    \eqref{stateq} with $\bar{u} = 0$.
  \end{enumerate}
\end{thm}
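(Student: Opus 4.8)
The plan is to analyze directly the scalar equation $H(u,D)=0$ of \eqref{defH} on $u\ge 0$, recalling that $u=0$ is always a root and that $H(-u,D)=-H(u,D)$, so only $u>0$ must be examined. Two identities drive everything. First, integrating $\partial_{v_1}\!\big(e^{-P_u(v)/D}\big)$ over $\R^N$ and using $\partial_{v_1}P_u(v)=v_1+\alpha v_1(\abs{v}^2-1)-u$ from \eqref{poly} gives
\begin{equation}\label{plan:ibp}
  H(u,D)=-\alpha\ird v_1(\abs{v}^2-1)\,f(v)\dv ,
\end{equation}
with $f$ the stationary density \eqref{stateq} for $\bar u=ue_1$. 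Second, writing $P_u(v)=Q(v)-uv_1$ with $Q(v)=\alpha\tfrac{\abs{v}^4}{4}+(1-\alpha)\tfrac{\abs{v}^2}{2}$ radial, and splitting $e^{uv_1/D}=\cosh(uv_1/D)+\sinh(uv_1/D)$, the evenness of $Q$ in $v_1$ kills the $\cosh$-part of the numerator and the $\sinh$-part of the denominator, so that for $u>0$
\begin{equation}\label{plan:sinh}
  H(u,D)=-\alpha\,\frac{\displaystyle\ird v_1(\abs{v}^2-1)\sinh(uv_1/D)\,e^{-Q(v)/D}\dv}{\displaystyle\ird \cosh(uv_1/D)\,e^{-Q(v)/D}\dv},
\end{equation}
whose denominator is positive; hence the sign of $H$ is opposite to that of the numerator integral, which I call $N(u,D)$.

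For part (1) I use Laplace's method as $D\to 0$. All critical points of $P_u$ lie on the $e_1$-axis (from $\nabla P_u=0$, i.e.\ $v(1-\alpha+\alpha\abs{v}^2)=ue_1$), and on that axis $P_u(se_1)=\tfrac{1-\alpha}{2}s^2+\tfrac{\alpha}{4}s^4-us$, with positive critical point $s(u)$ solving $\alpha s^3+(1-\alpha)s=u$. A short computation comparing the depths of the wells of this one-variable function (it has an extra well on the negative axis when $\alpha\ge 4$) shows that for $u$ near $1$ the global minimiser of $P_u$ is the nondegenerate point $s(u)e_1$; since $P_u$ grows like $\tfrac{\alpha}{4}\abs{v}^4$ at infinity, Laplace's method gives $H(u,D)\to s(u)-u=:\Psi(u)$ as $D\to 0$, locally uniformly near $u=1$. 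From $s(1)=1$ and $s'(1)=\tfrac{1}{2\alpha+1}$ we get $\Psi(1)=0$ and $\Psi'(1)=\tfrac{1}{2\alpha+1}-1<0$, hence $\Psi(1-\delta)>0>\Psi(1+\delta)$ for small $\delta>0$. Thus for $D$ small $H(1-\delta,D)>0>H(1+\delta,D)$, and by continuity of $H(\cdot,D)$ the intermediate value theorem produces a root $u(D)\in(1-\delta,1+\delta)$; letting $\delta\to 0$ yields a selection of roots with $u(D)\to 1$, which is part (1). (Controlling in addition $\partial_uH(u,D)\to\Psi'(u)$ as $D\to 0$ would upgrade this to a smooth branch through $(u,D)=(1,0)$ by the implicit function theorem.)

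For part (2), by oddness it suffices to prove $H(u,D)<0$ for all $u>0$ once $D$ is large, i.e.\ $N(u,D)>0$. Since $v_1\sinh(uv_1/D)\ge 0$ for $u>0$, splitting $\abs{v}^2-1=(v_1^2-1)+\abs{v_\perp}^2$ with $v_\perp=(v_2,\dots,v_N)$ gives $N=N_1+N_2$; integrating out $v_\perp$ and using the parity of $Q$,
\begin{align*}
  N_1 &= 2\int_0^\infty v_1(v_1^2-1)\sinh(uv_1/D)\,\zeta(v_1)\d v_1, \\
  N_2 &= 2\int_0^\infty v_1\sinh(uv_1/D)\,\eta(v_1)\d v_1,
\end{align*}
where $\zeta(v_1)=\int_{\R^{N-1}}e^{-Q(v_1,v_\perp)/D}\d v_\perp>0$ and $\eta(v_1)=\int_{\R^{N-1}}\abs{v_\perp}^2e^{-Q(v_1,v_\perp)/D}\d v_\perp\ge 0$ are even in $v_1$. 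Then $N_2\ge 0$, while in $N_1$ the integrand is negative only on $(0,1)$; bounding $\sinh(uv_1/D)$ by $\sinh(u/D)$ from above on $(0,1)$ and from below on $(1,2)$, and discarding the nonnegative part on $(2,\infty)$, yields
\begin{equation}\label{plan:N1}
  N_1\ge 2\sinh(u/D)\Big(\int_1^2 v_1(v_1^2-1)\,\zeta(v_1)\d v_1-\tfrac{2}{3\sqrt{3}}\int_0^1\zeta(v_1)\d v_1\Big),
\end{equation}
using $\max_{[0,1]}\abs{v_1^3-v_1}=\tfrac{2}{3\sqrt{3}}$. Rescaling $v_\perp=D^{1/4}w_\perp$ shows that $\zeta$ is nearly constant on $[0,2]$ for $D$ large (the variation of $Q/D$ over the dominant region is $O(D^{-1/2})$), and since $\int_1^2 v_1(v_1^2-1)\d v_1=\tfrac{9}{4}>\tfrac{2}{3\sqrt{3}}$, the bracket in \eqref{plan:N1} is positive for $D$ large. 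Hence $N=N_1+N_2>0$ for every $u>0$ once $D\ge D_0(\alpha,N)$, so $u=0$ is the only root.

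The main obstacle is the uniformity in $u$ in part (2). It is easy to show only that $\partial_uH(0,D)$ equals $\tfrac1D$ times the $v_1$-variance of the isotropic equilibrium minus $1$, which is $<0$ for $D$ large (after $v=D^{1/4}w$ that variance is $O(D^{1/2})$); but this merely excludes roots near $u=0$. Excluding roots for all $u>0$ at once is what forces the $\sinh$-decomposition \eqref{plan:sinh} and the coarse but uniform estimate \eqref{plan:N1}, whose content is that the region where $v_1(\abs{v}^2-1)$ has the unfavourable sign carries negligible relative mass when $D$ is large. In part (1), the delicate points are instead the identification of the global minimiser of $P_u$ with the well $s(u)e_1$ for $u$ near $1$ (which has to be checked for all $\alpha>0$) and the uniform-in-$u$ justification of Laplace's method, together with its differentiated version if the smooth-branch statement is desired.
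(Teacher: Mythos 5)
Your proposal is correct, and although it rests on the same two pillars as the paper (the integration-by-parts identity turning $H$ into $-\alpha\int v_1(|v|^2-1)f\,dv$, and Laplace asymptotics around the minimiser of $P_u$), it completes both halves by genuinely different mechanisms. For large $D$, the paper proves the lemma of Section \ref{sec:Dinfty} that $\partial \tilde H/\partial u<0$ for all $u>0$ once $D\geq D_0$, compensating the positive contribution of the unit ball by the ball $B_1((3,0,\dots,0))$ and then using $\tilde H(0,D)=0$; you instead show $\tilde H(u,D)<0$ directly for every $u>0$ via the $\cosh/\sinh$ parity split, integrating out $v_\perp$ and compensating the slab $0<v_1<1$ by $1<v_1<2$. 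Since your bracket is independent of $u$, the uniformity in $u$ --- the real difficulty, as you correctly note --- is automatic, and the remaining step (near-constancy of $\zeta$ on $[0,2]$ after the $v_\perp=D^{1/4}w_\perp$ rescaling) is tersely stated but routine, of the same elementary flavour as the paper's estimates \eqref{tech1}--\eqref{tech2}. For small $D$, the paper builds the refined expansion of Theorem \ref{bigkahuna} precisely in order to extend $H$ as a $\mathcal{C}^1$ function up to $D=0$, compute $\partial_u H(1,0)=-2\alpha/(1+2\alpha)$ as in \eqref{finally} together with $\partial_D H(1,0)$, and apply the implicit function theorem; you need only the leading-order Laplace limit $H(u,D)\to s(u)-u$ at the two fixed values $u=1\pm\delta$, the intermediate value theorem, and a diagonal selection, which does suffice because the statement only asks for some $u(D)\to 1$, not a smooth or locally unique branch (and your $s'(1)=1/(1+2\alpha)$ matches the paper's $c_2(1)/c_0(1)$, a good consistency check). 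What the paper's heavier machinery buys is local uniqueness, differentiability of the bifurcation branch and the slope $\partial u/\partial D$ at $D=0$, which are exploited in Section \ref{S:numerics}; what your route buys is economy. The only points to finish carefully are the ones you flag yourself: identifying $s(u)e_1$ as the unique nondegenerate global minimiser of $P_u$ for $u$ near $1$ and all $\alpha>0$ (the paper's shortcut is the symmetry $P_u(-v_1,v_2,\dots,v_N)>P_u(v_1,v_2,\dots,v_N)$ for $v_1>0$, which disposes of the negative well at once), and invoking a version of Laplace's method valid for polynomially growing prefactors, which the quartic growth of $P_u$ licenses.
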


Let us notice that the previous theorem does prove the appearance of a
phase transition, although it does not give information about the critical value where it
occurs. We will numerically show in section \ref{S:numerics} that this
phase transition is continuous and it happens at a sharp value of $D$
as in \cite{dfl2013} for the continuous Vicsek model. We will numerically
demonstrate in subsection \ref{S:stability} the time asymptotic stability
of the spatially homogeneous solution for large noise and the non homogeneous
solution for small noise in one dimension using a Monte Carlo-like particle method.


\subsection{$D \to \infty$ Case: Unique Disordered State}
\label{sec:Dinfty}

We define $\tilde H(u,D)=Z\,H(u,D)$ as
\begin{equation*}
  \tilde H(u,D) =
  \int_{\R^N} (v_1-u) \exp \left( -\tfrac{|v|^2}{2D} +
    \tfrac{uv_1}{D} \right) \exp\left( -\alpha \left( \tfrac{|v|^4}{4D}
      - \tfrac{|v|^2}{2D} \right)\right) dv.
\end{equation*}
Noticing that
\begin{equation*}
  (u-v_1) = D \frac{\partial}{\partial v_1} \left( -\frac{|v|^2}{2D} + \frac{u v_1}{D} \right)\,,
\end{equation*}
we may integrate by parts and obtain
\begin{align*}
  \tilde H(u,D) 
  &=
  - D \int_{\R^N} \left(
    \frac{\partial}{\partial v_1} \exp \left( -\tfrac{|v|^2}{2D} +
      \tfrac{uv_1}{D} \right)
  \right)
  \exp\left( -\alpha \left( \tfrac{|v|^4}{4D}
      - \tfrac{|v|^2}{2D} \right)\right) dv
  \nonumber\\
  &= -\alpha
  \int_{\R^N}
  \exp \left( -\tfrac{|v|^2}{2D} +
    \tfrac{uv_1}{D} \right)
  \exp\left( -\alpha \left( \tfrac{|v|^4}{4D}
      - \tfrac{|v|^2}{2D} \right)\right)
  v_1(|v|^2-1) dv.
\end{align*}

\begin{lem} 
  There exists $D_0 > 0$ such that
  $\frac{\partial \tilde H}{\partial u} < 0$ for all $u>0$ and $D \geq D_0$.
\end{lem}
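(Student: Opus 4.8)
The plan is to differentiate $\tilde H$ in $u$, reduce the statement to the positivity of a single integral, and then rescale to isolate the large-$D$ regime. Differentiating the displayed formula for $\tilde H(u,D)$ under the integral sign is legitimate because $\alpha>0$, so the quartic term in the exponent dominates and both the integrand and its $u$-derivative are dominated locally uniformly in $(u,D)$; the operator $\partial_u$ only hits the factor $\exp(uv_1/D)$ and brings down $v_1/D$. Hence
$$\frac{\partial\tilde H}{\partial u}(u,D) = -\frac{\alpha}{D}\int_{\R^N} v_1^2\,(|v|^2-1)\exp\!\left(-\tfrac{P_u(v)}{D}\right)dv =: -\frac{\alpha}{D}\,I(u,D),$$
with $P_u$ given by \eqref{poly}. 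Since $\alpha,D>0$, it is enough to show that $I(u,D)>0$ for every $u>0$ as soon as $D$ exceeds some $D_0$ that does not depend on $u$.

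To exhibit the correct scale I would substitute $v=D^{1/4}w$ and set $\epsilon:=D^{-1/2}$, $b:=uD^{-3/4}$. A direct computation from \eqref{poly} gives $\exp(-P_u(D^{1/4}w)/D)=g_{b,\epsilon}(w)$, where $g_{b,\epsilon}(w):=\exp\big(-\tfrac{\alpha}{4}|w|^4+\tfrac{(\alpha-1)\epsilon}{2}|w|^2+bw_1\big)$, and after dividing by the positive constant $D^{N/4+1/2}$ one gets
$$\frac{I(u,D)}{D^{N/4+1/2}}=D^{1/2}\int_{\R^N} w_1^2|w|^2\,g_{b,\epsilon}(w)\,dw-\int_{\R^N} w_1^2\,g_{b,\epsilon}(w)\,dw,$$
all integrals being finite because $\alpha>0$. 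If $\nu_{b,\epsilon}$ denotes the probability measure proportional to $w_1^2\,g_{b,\epsilon}(w)\,dw$, then $I(u,D)>0$ is equivalent to $\int_{\R^N}|w|^2\,d\nu_{b,\epsilon}>D^{-1/2}$. Thus the lemma follows once one proves a bound $\int_{\R^N}|w|^2\,d\nu_{b,\epsilon}\geq m$ with $m>0$ valid for all $b\geq0$ and $\epsilon\in(0,1]$: any $D_0>\max(1,m^{-2})$ then works, since $u>0$ forces $b>0$ and $D\geq D_0$ forces $\epsilon=D^{-1/2}\leq1$ and $D^{-1/2}<m$.

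This last estimate says that the rescaled Gibbs measures $\nu_{b,\epsilon}$ cannot concentrate at the origin, uniformly in the parameters, and is the heart of the argument. I would bound $\int|w|^2\,d\nu_{b,\epsilon}\geq\delta^2\,\nu_{b,\epsilon}(\{|w|\geq\delta\})=\delta^2\big(1-\nu_{b,\epsilon}(\{|w|<\delta\})\big)$ and estimate the small-ball probability crudely: on $\{|w|<\delta\}$ one has $w_1^2\,g_{b,\epsilon}(w)\leq\delta^2 e^{|\alpha-1|/2}e^{b\delta}$ for $\delta\leq1$, $\epsilon\leq1$, while restricting the normalization integral to the fixed box $\{1\leq w_1\leq2,\ |w'|\leq1\}$ — where $g_{b,\epsilon}$ is bounded below by a constant depending only on $\alpha,N$ and $e^{bw_1}\geq e^{b}$ — gives a lower bound $c_1 e^{b}$ with $c_1=c_1(\alpha,N)>0$. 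Dividing, $\nu_{b,\epsilon}(\{|w|<\delta\})\leq C'\delta^{N+2}e^{-b(1-\delta)}\leq C'\delta^{N+2}$ for $\delta\in(0,1)$ and all $b\geq0$, $\epsilon\in[0,1]$; choosing $\delta$ so small that $C'\delta^{N+2}\leq\tfrac12$ yields $\int|w|^2\,d\nu_{b,\epsilon}\geq\delta^2/2=:m>0$, as required.

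The only genuinely delicate point — and the one forcing the specific choices above — is the uniformity in $u$, equivalently in $b$: for a fixed $D$, letting $u\to\infty$ sends the mass of $\nu_{b,\epsilon}$ out to $w_1\approx(b/\alpha)^{1/3}\to\infty$, so comparing the mass near the origin with the mass near a \emph{fixed} reference point would fail. Comparing instead with the bulk in the half-space $\{w_1\geq1\}$, where the factor $e^{bw_1}$ works in our favour, is exactly what keeps the ratio $e^{b\delta}/e^{b}=e^{-b(1-\delta)}\leq1$ bounded as $b\to\infty$ (this needs $\delta<1$). An alternative is to combine continuity and positivity of $(b,\epsilon)\mapsto\int|w|^2\,d\nu_{b,\epsilon}$ on the compact part of parameter space with a Laplace-asymptotics argument showing this quantity tends to $+\infty$ as $b\to\infty$, but the elementary estimate above is shorter and self-contained.
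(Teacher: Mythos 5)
Your argument is correct, and it reaches the lemma by a noticeably different route than the paper. Your starting formula for $\frac{\partial \tilde H}{\partial u}$ agrees with the paper's, but from there the paper stays in the original variables: it splits $\R^N$ into the unit ball $\mathcal{A}$, the fixed ball $\mathcal{B}=B_1((3,0,\dots,0))$, and the remainder where the integrand is negative, uses that the quartic factor $\exp\left(-\alpha\left(\tfrac{|v|^4}{4D}-\tfrac{|v|^2}{2D}\right)\right)$ is $\epsilon$-close to $1$ on bounded sets for $D$ large (estimate \eqref{tech2}), and checks that the negative contribution from $\mathcal{B}$ dominates the positive one from $\mathcal{A}$, which yields an explicit admissible $D_0$. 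You instead rescale $v=D^{1/4}w$ so that the quartic term becomes the dominant $O(1)$ part of the exponent, and reduce the sign question to a uniform second-moment lower bound for the tilted quartic Gibbs measures $\nu_{b,\epsilon}$, proved by a small-ball versus bulk comparison; this is a valid uniform non-concentration estimate, and your bookkeeping (strictness of $I(u,D)>0$ once $D^{-1/2}<m$, uniformity for $b\geq 0$, $\epsilon\le 1$) is sound. The two proofs share the essential mechanism that makes the bound uniform in $u$: the unfavourable region near the origin is compared with a region at larger first coordinate ($v_1\geq 2$ in the paper, $w_1\geq 1$ in yours), where the tilt $e^{uv_1/D}$, respectively $e^{bw_1}$, is at least as large, so the ratio of the two contributions stays controlled as $u\to\infty$. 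What each buys: the paper's version is shorter and gives concrete constants for $D_0$; yours identifies the natural $D^{1/4}$ width of the disordered equilibrium for large $D$, packages the estimate as a clean statement about a two-parameter family of probability measures that is uniform in the tilt, and is more modular (the same moment bound would handle other polynomial prefactors than $v_1^2$), at the price of a less explicit $D_0$.
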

\begin{proof}
Computing $\frac{\partial \tilde H}{\partial u}$, we get
\begin{align*}
\frac{\partial \tilde H}{\partial u} &= \frac{\alpha}{D} \int_{\R^N} \exp \left( -\tfrac{|v|^2}{2D} + \tfrac{u v_1}{D} \right)  \exp\left( -\alpha \left( \tfrac{|v|^4}{4D} - \tfrac{|v|^2}{2D} \right)\right) v_1^2(1-|v|^2) \dv\,.
\end{align*}
The term $1-|v|^2$ obviously determines the sign of $\frac{\partial \tilde H}{\partial u}$.  We will compensate the positivity of this term on the unit ball with a piece of the integral outside.  First, let us estimate the integrand inside the unit ball,
\begin{equation}\label{tech1}
\exp \left( -\tfrac{|v|^2}{2D} + \tfrac{u v_1}{D} \right)  v_1^2(1-|v|^2) \leq  \exp(\frac{u}{D})\,.
\end{equation}
We will also use that the other terms are close to 1 in any bounded region for $D$ large enough. More precisely, for all $\epsilon >0$  
\begin{equation}\label{tech2}
\left|\exp \left( -\alpha \left( \tfrac{|v|^4}{4D} - \tfrac{|v|^2}{2D} \right)\right) -1\right|\leq \varepsilon\,,
\end{equation}
for $D\geq D(\epsilon)$ large enough and $|v|<4$. 

Let us write $\R^d=\mathcal{A}\cup \mathcal{B}\cup \mathcal{C}$ with $\mathcal{A}=B_1(0)$, $\mathcal{B}=B_1(\eta)$ and $\mathcal{C}=\R^d\setminus (\mathcal{A}\cup\mathcal{B})$. Here, the notation $\eta$ refers to the vector $(3,0,\dots,0)$ and $B_1(\tilde\eta)$ denotes the Euclidean ball of radius 1 centered at $\tilde\eta$. We separate the integrand into three pieces corresponding to the sets $\mathcal{A}$, $\mathcal{B}$, and $\mathcal{C}$. Since the integrand is negative in $\mathcal{C}$, then we can estimate
\begin{align*}
\frac{\partial \tilde H}{\partial u} \leq  I+II\,,
\end{align*}
where $I$ and $II$ are the integrals restricted to $\mathcal{A}$ and $\mathcal{B}$ respectively. Taking into account \eqref{tech1} and \eqref{tech2}, we control the first term as
\begin{align*}
I &:=  \frac{\alpha}{D} \int_{\mathcal{A}} \exp \left( -\tfrac{|v|^2}{2D} + \tfrac{u v_1}{D} \right)  \exp\left( -\alpha \left( \tfrac{|v|^4}{4D} - \tfrac{|v|^2}{2D} \right)\right) v_1^2(1-|v|^2) dv \leq \frac{\alpha}{D}  |\mathcal{A}| (1 + \epsilon) \exp\left(\frac{u}{D}\right).
\end{align*}
Similarly, the second term is bounded by
\begin{align*}
II &= -\frac{\alpha}{D} \int_{\mathcal{B}} \exp \left( -\tfrac{|v|^2}{2D} + \tfrac{u v_1}{D} \right)  \exp\left( -\alpha \left( \tfrac{|v|^4}{4D} - \tfrac{|v|^2}{2D} \right)\right) v_1^2(|v|^2-1) dv\\
&\leq -\frac{\alpha}{D} \int_{\mathcal{B}} \exp \left( -\tfrac{|v|^2}{2D} + \tfrac{u v_1}{D} \right)  (1-\epsilon) v_1^2(|v|^2-1) dv \leq -\frac{12\alpha}{D} |\mathcal{B}|(1-\epsilon) \exp \left( -\tfrac{16}{2D} + \tfrac{2u}{D} \right)
\end{align*}
due to \eqref{tech2} and since $v_1>2$ and $|v|^2\leq 4$ in $\mathcal{B}$. In order to show that $\frac{\partial \tilde H}{\partial u} \leq 0$, we need only show that $I+II \leq 0$:
\begin{align*}
\frac{\partial \tilde H}{\partial u} \leq  I+II &\leq \frac{\alpha}{D} |\mathcal{A}| \left( (1 + \epsilon) \exp\left(\frac{u}{D}\right) -  12(1-\epsilon) \exp \left( -\tfrac{8}{D} + \tfrac{2u}{D} \right)\right)\\
&\leq \frac{\alpha}{D} |\mathcal{A}| \exp\left(\frac{2u}{D}\right) \left( (1 + \epsilon) -  12(1-\epsilon) \exp \left( -\tfrac{8}{D} \right)\right)
\,.
\end{align*}
Here, we use that $|\mathcal{A}|=|\mathcal{B}|$. Choosing $\epsilon=\tfrac12$, we see that $\frac{\partial \tilde H}{\partial u}\leq 0$ for $D\geq\max( D(\tfrac12),\frac{8}{\log 12})$.
\end{proof}

The preceding claim proves that $H(u,D)$ can have at most one nonnegative root, and since $H(0,D)=0$, there can be no other positive root.  Hence, in the case of $D \to \infty$, there can only be the radially symmetric stationary solution associated to $u=0$.


\subsection{Laplace's Method}
\label{S:laplacesMethod}

Laplace's method gives the asymptotics of integrals of the form
\begin{equation*}
  \int_{\R^N} e^{\frac{f(x)}{D}} g(x) \dx
\end{equation*}
for given functions $f(x)$ and $g(x)$ as $D \to 0$. A usual statement
of Laplace's Method that is commonly found in the asymptotic analysis
literature (see \cite{BO,miller} for instance) reads as follows:

\begin{thm}
  Assume $f$ is a smooth function that has a unique global minimum at
  $x_0$, and that there exist $\epsilon, \delta > 0$ such that
  $f(x) > f(x_0) + \delta$ for all $x$ with $|x-x_0| > \epsilon$.
  Then, as $D \to 0$,
\begin{equation*}
  \int_{\R^N} e^{\frac{-f(x)}D} g(x) dx
  \sim
  \left( \pi D \right)^\frac{N}{2}
  \det( \Lambda(x_0))^{-\frac{1}{2}}
  \,e^{\frac{-f(x_0)}D} g(x_0),
\end{equation*}
where $\Lambda(x_0)$ is the Hessian matrix of $f$ at $x_0$.
\end{thm}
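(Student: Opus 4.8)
The plan is to prove the theorem by the classical localize‑then‑rescale argument. First I would reduce to the normalized situation: translating, assume $x_0=0$, and dividing both sides by $e^{-f(0)/D}$, assume $f(0)=0$. Since $0$ is a global minimum, $\nabla f(0)=0$ and the Hessian $\Lambda:=\nabla^2 f(0)$ is positive semidefinite; as the right‑hand side involves $\det(\Lambda)^{-1/2}$, the statement tacitly requires $\det\Lambda\neq 0$, so $\Lambda$ is positive definite. I would also work under the mild integrability hypothesis $\int_{\R^N}e^{-f(x)/D_0}\abs{g(x)}\dx<\infty$ for some $D_0>0$ — needed for the left‑hand side to be defined for small $D$ and harmless in every application in this paper, where $g$ is polynomial and $f$ is coercive — and restrict to $0<D<D_0$.

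Next, I would split $\R^N=B_\epsilon\cup B_\epsilon^{\,c}$ with $B_\epsilon=\{\abs{x}<\epsilon\}$, using the $\epsilon,\delta$ from the hypothesis, and dispatch the tail first. On $B_\epsilon^{\,c}$ one has $f(x)>\delta$, so writing $e^{-f/D}=e^{-f(1/D-1/D_0)}e^{-f/D_0}$ and using $1/D-1/D_0>0$,
\[
\int_{B_\epsilon^{\,c}} e^{-f(x)/D}\abs{g(x)}\dx \;\le\; e^{-\delta(1/D-1/D_0)}\int_{\R^N} e^{-f(x)/D_0}\abs{g(x)}\dx \;=\; O\!\big(e^{-\delta/D}\big),
\]
which is negligible compared with the target order $D^{N/2}$ since $e^{-\delta/D}/D^{N/2}\to 0$ as $D\to 0$.

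The leading contribution comes from $B_\epsilon$. Shrinking $\epsilon$ if necessary, Taylor's theorem together with $\nabla f(0)=0$ gives $f(x)=\tfrac12 x^\top\Lambda x+r(x)$ on $B_\epsilon$ with $\abs{r(x)}\le \omega(\abs{x})\abs{x}^2$ for a modulus $\omega(t)\to 0$; contracting $\epsilon$ once more we may also assume $f(x)\ge\tfrac14 x^\top\Lambda x$ on $B_\epsilon$. Substituting $x=\sqrt{D}\,y$,
\[
\int_{B_\epsilon} e^{-f(x)/D} g(x)\dx = D^{N/2}\int_{\R^N} \mathbf 1_{\{\abs{y}<\epsilon/\sqrt D\}}(y)\; e^{-\frac12 y^\top\Lambda y - r(\sqrt D y)/D}\; g(\sqrt D y)\dy .
\]
For each fixed $y$ the integrand tends to $g(0)\,e^{-\frac12 y^\top\Lambda y}$, because $\abs{r(\sqrt D y)/D}\le \omega(\sqrt D\abs{y})\abs{y}^2\to 0$ and $g$ is continuous at $0$; and it is dominated by $\big(\sup_{B_\epsilon}\abs{g}\big)e^{-\frac14 y^\top\Lambda y}\in L^1(\R^N)$, using $f(\sqrt D y)/D\ge \tfrac14 y^\top\Lambda y$ on the support of the indicator. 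Dominated convergence then yields $\int_{B_\epsilon} e^{-f/D}g\dx\sim D^{N/2}g(0)\int_{\R^N}e^{-\frac12 y^\top\Lambda y}\dy=g(0)\int_{\R^N}e^{-\frac{1}{2D}x^\top\Lambda x}\dx$, which is precisely the prefactor in the statement.

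Collecting the two pieces and reinstating the translation by $x_0$ and the factor $e^{-f(x_0)/D}$ gives the claimed equivalence. The hard part is the uniform control underlying dominated convergence in the bulk term: one must contract $B_\epsilon$ enough that the quadratic part of $f$ dominates, i.e. $f(x)\ge c\abs{x}^2$ there — this is exactly where positive‑definiteness of the Hessian is used — since otherwise the rescaled integrand has no $D$‑independent integrable majorant and the passage to the Gaussian limit fails. The tail estimate is routine, but it does rely on the (suppressed) integrability of $g\,e^{-f/D}$ noted in the reduction step.
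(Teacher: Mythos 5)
Your argument is correct, but it is worth pointing out that the paper never proves this particular theorem: it is quoted as the classical statement from the asymptotics literature \cite{BO,miller}, and what the paper actually proves is its adapted version, Theorem \ref{bigkahuna}, together with Lemmas \ref{lem:Laplace-basic} and \ref{lem:Laplace-basic-2}, by a different route. You localize to a fixed ball $B_\epsilon(x_0)$, kill the tail using $f>f(x_0)+\delta$ plus an integrability assumption (which you rightly add explicitly; the stated theorem needs it), and then rescale $x=x_0+\sqrt{D}\,y$ and pass to the limit by dominated convergence after contracting $\epsilon$ so that the quadratic part dominates. This is more general in one direction (any smooth $f$, continuous $g$) but yields only the leading term, with no quantitative control of the error. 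The paper instead restricts to polynomial $P$ and $g$, localizes to a ball of shrinking radius $\delta=D^{1/3}$ with an exponentially small remainder (Lemma \ref{lem:Laplace-basic-2}), expands $e^{-R(x)/D}$ in a finite Taylor series and evaluates each resulting Gaussian moment exactly (Lemma \ref{lem:Laplace-basic}); that buys a full expansion in integer powers of $D$ with an $\mathcal{O}(D^{n+1})$ remainder whose constants depend continuously on the coefficients — precisely the uniformity in $u$ near $1$ needed in Section \ref{S:anyDimension}, which a soft dominated-convergence argument cannot provide. Two small points to tidy in your write-up: (i) you dispatch the tail on $B_\epsilon^c$ before shrinking $\epsilon$; to shrink legitimately you should observe that, by uniqueness of the global minimum and compactness of the annulus $\{\epsilon'\le|x-x_0|\le\epsilon\}$, the hypothesis persists for the smaller radius with a possibly smaller $\delta$; (ii) your limit is $(2\pi D)^{N/2}\det\bigl(\nabla^2 f(x_0)\bigr)^{-1/2}g(x_0)$, which agrees with the displayed prefactor $(\pi D)^{N/2}\det(\Lambda(x_0))^{-1/2}g(x_0)$ only if $\Lambda(x_0)$ is read as the matrix of the quadratic form $\tfrac12\nabla^2 f(x_0)$ — the convention the paper uses later for $P_u$ in \eqref{pol2}--\eqref{pol3} — so the claim ``precisely the prefactor in the statement'' deserves a clarifying word rather than being asserted.
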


We state and briefly prove a modified version of it including higher
order terms which is well adapted to our arguments, avoiding general
statements which become cumbersome in a multidimensional setting. We
essentially follow the strategy in \cite[Chapter 6]{BO}; see also
\cite{miller} for multidimensional statements. For a multi-index
$\beta = (\beta_1,\dots,\beta_N)$ we denote
\begin{equation*}
  M_\beta := \ird e^{-|x|^2} x^\beta \dx
  =  \ird e^{-|x|^2} x_1^{\beta_1} \cdots x_N^{\beta_N} \dx.
\end{equation*}
Of course, the constant $M_\beta$ is $0$ whenever one of the
$\beta_i$ is odd, and $M_\beta$ may also be expressed in terms of
the Gamma function. The following calculation is at the basis of Laplace's method:

\begin{lem}
  \label{lem:Laplace-basic}
  Let $\beta = (\beta_i)_{i=1,\dots,N}$ be a multi-index and
  Let $Q: \R^N \to \R$ be the quadratic function given by
  \begin{equation*}
    Q(x) = Q(x_1,\dots,x_N) = \sum_{i=1}^N \lambda_i x_i^2,
  \end{equation*}
  where $\lambda_i > 0$ for all $i=1,\dots,N$. It holds that
  \begin{equation*}
    \ird e^{-\frac{Q(x)}{D}} x^\beta \d x
    =
    D^{\frac{N + |\beta|}{2}}
    \left( \prod_{i=1}^N \lambda_i^{-(1+\beta_i)/2} \right)
    M_\beta
  \end{equation*}
with $|\beta|=\sum_i \beta_i$.
\end{lem}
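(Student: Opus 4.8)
The statement is a rescaling identity, so the plan is to reduce the integral against the anisotropic Gaussian $e^{-Q(x)/D}$ to the standard one $e^{-|x|^2}$ by a single linear, diagonal change of variables. Concretely, set $x_i = \sqrt{D/\lambda_i}\,y_i$ for $i=1,\dots,N$, which is legitimate since each $\lambda_i>0$ and $D>0$. Under this substitution the quadratic form collapses, $Q(x)/D = \sum_i \lambda_i x_i^2 / D = \sum_i y_i^2 = |y|^2$; the Jacobian factor is $dx = \big(\prod_i \sqrt{D/\lambda_i}\,\big)\,dy = D^{N/2}\big(\prod_i \lambda_i^{-1/2}\big)\,dy$; and the monomial transforms as $x^\beta = \prod_i x_i^{\beta_i} = D^{|\beta|/2}\big(\prod_i \lambda_i^{-\beta_i/2}\big)\,y^\beta$.

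Substituting and collecting powers of $D$ and of each $\lambda_i$ gives
\begin{equation*}
  \ird e^{-\frac{Q(x)}{D}} x^\beta \dx
  = D^{\frac{N}{2}+\frac{|\beta|}{2}}
  \left( \prod_{i=1}^N \lambda_i^{-1/2}\,\lambda_i^{-\beta_i/2} \right)
  \ird e^{-|y|^2} y^\beta \dy
  = D^{\frac{N+|\beta|}{2}}
  \left( \prod_{i=1}^N \lambda_i^{-(1+\beta_i)/2} \right) M_\beta,
\end{equation*}
which is exactly the claimed formula. Absolute convergence of all the integrals involved is not an issue: since every $\lambda_i>0$, the Gaussian factor $e^{-Q(x)/D}$ decays faster than any polynomial grows, so $\int e^{-Q(x)/D}|x^\beta|\dx<\infty$, and the change of variables is justified by Fubini/Tonelli applied coordinate by coordinate. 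Equivalently, because both the integrand and $\R^N$ factor as products over the coordinate axes, one may instead perform $N$ one-dimensional substitutions $x_i=\sqrt{D/\lambda_i}\,y_i$ and invoke the one-variable Gaussian moment $\int_\R e^{-y^2}y^{\beta_i}\dy$, whose product over $i$ is $M_\beta$.

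I do not anticipate a genuine obstacle here — the lemma is the elementary computational core on which the subsequent Laplace-method expansion is built. The only points requiring a modicum of care are bookkeeping (verifying that the exponents $(1+\beta_i)/2$ and $(N+|\beta|)/2$ come out correctly after combining the Jacobian and the monomial rescaling) and noting the consistency check that $M_\beta=0$ as soon as some $\beta_i$ is odd, by oddness of $y\mapsto e^{-|y|^2}y^\beta$ in that coordinate, which matches the vanishing one would obtain directly from the left-hand side.
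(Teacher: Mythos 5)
Your proof is correct and is exactly the paper's argument: the paper proves the lemma by the same diagonal change of variables $y_i=\sqrt{\lambda_i/D}\,x_i$ and states that it directly yields the formula. Your version simply spells out the bookkeeping (Jacobian, monomial rescaling, convergence) that the paper leaves implicit.
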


\begin{proof}
  Carrying out the change of variables $$y = (y_1,\dots,y_N) =
  \frac{1}{\sqrt{D}}(\sqrt{\lambda_1} x_1,\dots, \sqrt{\lambda_N}
  x_N)$$ directly yields the given expression.
\end{proof}

From the previous result one directly obtains the asymptotics of
integrals with any polynomial $g(x)$ instead of $x^\beta$. By a linear
change of coordinates it is then also simple to extend the result to
include any positive definite quadratic form in the exponential, but
we will not need that for our purposes. Extending the result to more
general functions in the exponential requires a more careful argument
that we give next. We begin by noticing that the only region
asymptotically contributing to the integral is concentrated around
$x=0$:
\begin{lem}
  \label{lem:Laplace-basic-2}
  Call $\delta := D^{1/3}$. Under the same conditions as in Lemma
  \ref{lem:Laplace-basic},
  \begin{equation*}
    \int_{|x| < \delta} e^{-\frac{Q(x)}{D}} x^\beta \d x
    =
    D^{\frac{N + |\beta|}{2}}
    \left( \prod_{i=1}^N \lambda_i^{-(1+\beta_i)/2} \right)
    M_\beta
    +
    \mathcal{O}\left( \exp\left(
      - \frac12 \lambda N D^{-1/3}
    \right)
 \right)
  \end{equation*}
  as $D \to 0$, where $\lambda := \min_{i=1,\dots,N} \lambda_i$. The
  constant implicit in the $\mathcal{O}$ notation depends continuously
  on the coefficients $\lambda_i$.
\end{lem}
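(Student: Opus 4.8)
The plan is to replace the ball $\{|x|<\delta\}$ by all of $\R^N$ and to check that the outer piece we throw away is exponentially small. Indeed,
$$
  \int_{|x|<\delta} e^{-\frac{Q(x)}{D}} x^\beta \d x
  = \ird e^{-\frac{Q(x)}{D}} x^\beta \d x
  - \int_{|x|\ge\delta} e^{-\frac{Q(x)}{D}} x^\beta \d x ,
$$
and Lemma~\ref{lem:Laplace-basic} already identifies the first integral on the right with $D^{(N+|\beta|)/2}\big(\prod_{i=1}^N\lambda_i^{-(1+\beta_i)/2}\big) M_\beta$. So the whole task reduces to estimating the tail integral over $\{|x|\ge\delta\}$ and showing it is $\mathcal{O}\big(\exp(-c D^{-1/3})\big)$ for a suitable $c>0$.

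For the tail I would use the crude bound $Q(x)=\sum_{i=1}^N\lambda_i x_i^2\ge\lambda|x|^2$ with $\lambda=\min_i\lambda_i$, which on $\{|x|\ge\delta\}=\{|x|\ge D^{1/3}\}$ gives $Q(x)/D\ge\lambda\delta^2/D=\lambda D^{-1/3}$. To prevent the polynomial weight $x^\beta$ from interfering, I split the exponential as $e^{-Q(x)/D}=e^{-Q(x)/(2D)}\,e^{-Q(x)/(2D)}$ and bound the first factor by $e^{-\frac12\lambda D^{-1/3}}$ on the tail region, obtaining
$$
  \left| \int_{|x|\ge\delta} e^{-\frac{Q(x)}{D}} x^\beta \d x \right|
  \le e^{-\frac12\lambda D^{-1/3}}\, \ird e^{-\frac{Q(x)}{2D}} |x^\beta| \d x .
$$
The remaining integral is computed by exactly the rescaling of Lemma~\ref{lem:Laplace-basic} applied with diffusion $2D$: with $y_i=\sqrt{\lambda_i/(2D)}\,x_i$ it equals $(2D)^{(N+|\beta|)/2}\big(\prod_{i=1}^N\lambda_i^{-(1+\beta_i)/2}\big)\,\ird e^{-|y|^2}|y^\beta|\d y$, and the last factor is a finite constant. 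Hence this integral stays bounded as $D\to0$, with a bound depending continuously on $(\lambda_i)$, so the tail is $\mathcal{O}\big(\exp(-\tfrac12\lambda D^{-1/3})\big)$ with implicit constant continuous in $(\lambda_i)$, which is the error term claimed in the statement.

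There is no real obstacle: the only thing to watch is that the polynomial $x^\beta$ grows, so one cannot simply pull $e^{-\lambda D^{-1/3}}$ out of the tail integral — reserving a Gaussian factor $e^{-Q(x)/(2D)}$ before extracting the uniform smallness is precisely what makes this work, and the dependence of the constant on $(\lambda_i)$ is transparent from the explicit change of variables.
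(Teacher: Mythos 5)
Your argument is correct and is essentially the paper's own proof: write the ball integral as the full integral (computed by Lemma \ref{lem:Laplace-basic}) minus the tail over $\{|x|\ge\delta\}$, split $e^{-Q(x)/D}=e^{-Q(x)/(2D)}e^{-Q(x)/(2D)}$, pull out $e^{-\frac{\lambda}{2}D^{-1/3}}$ from one factor, and bound what remains (the paper bounds it by the $D$-independent constant $\ird e^{-Q(x)/2}|x|^\beta\dx$ using $D\le1$, while you evaluate it exactly by the rescaling of Lemma \ref{lem:Laplace-basic} with $2D$ — both are fine and give the continuity in the $\lambda_i$). The only discrepancy is that your rate is $\exp(-\tfrac12\lambda D^{-1/3})$ rather than the stated $\exp(-\tfrac12\lambda N D^{-1/3})$; note the paper's own estimate really only justifies this same $N$-free rate (on the tail one only has $Q(x)\ge\lambda\delta^2$, not $\lambda N\delta^2$), and any exponentially small remainder suffices for the uses of the lemma later on.
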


\begin{proof}
  Due to Lemma \ref{lem:Laplace-basic} we just need to estimate the
  difference to the integral over all of $\R^N$:
  \begin{equation*}
    \int_{|x| \geq \delta} e^{-\frac{Q(x)}{D}} x^\beta \d x
    \leq
    e^{-\frac{\delta^2}{2D} \lambda N}
    \int_{|x| \geq \delta} e^{-\frac{Q(x)}{2D}} x^\beta \d x
    \leq
    \exp\left(
      - \frac12 \lambda N D^{-1/3}
    \right)
    \ird e^{-\frac{Q(x)}{2}} |x|^\beta \d x,
  \end{equation*}
  where we have assumed $D \leq 1$ since the statement concerns only
  the asymptotics as $D \to 0$. This gives an explicit bound of the
  remainder term.
\end{proof}

We now state the main result on Laplace's method that we use in this
paper:

\begin{thm}\label{bigkahuna}
  Let $P: \R^N \to \R$ be a polynomial function given by
  \begin{equation*}
    P(x) = a_0 + \sum_{i=1}^N \lambda_i x_i^2 + R(x) = a_0 + Q(x) + R(x),
  \end{equation*}
  where $a_0 \in \R$, $\lambda_i > 0$ for all $i=1,\dots,N$ and $R$
  contains only terms of degree three or higher. Assume that for some
  $\mu > 0$,
  \begin{equation}
    \label{eq:gm}
    P(x) - a_0 \geq \mu \min\{ |x|^2, 1\}
    \quad \text{ for all $x \in \R^N$.}
  \end{equation}
  Let $g: \R^N \to \R$ be a polynomial such that
  \begin{equation}
    \label{eq:finite}
    \ird e^{-\frac{P(x)}{D}} |g(x)| \d x < \infty
  \end{equation}
  for all $D \leq 1$. For $n \in \NN$ we have the expansion
  \begin{equation*}
    D^{-N/2} e^{\frac{a_0}{D}} \ird e^{-\frac{P(x)}{D}} g(x) \d x
    =
    \left( \prod_{k=1}^N \lambda_k^{-1/2} \right)
    M_0\, g(0)
    + \sum_{i=1}^n K_i D^i + \mathcal{O}(D^{n+1}), \quad \mbox{as } D\to 0\,,
  \end{equation*}
  where the numbers $K_i$ and the constant implicit in the
  $\mathcal{O}$ notation depend continuously only on $\mu$ and the
  coefficients of $P$ and $g$, and $M_0 := \ird e^{-|x|^2}$. In
  addition, $K_i$ depends only on the coefficients of $g$ of degree at
  most $2i$, and is equal to $0$ if $g$ has no such terms.
\end{thm}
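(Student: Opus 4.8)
The plan is to localize the integral to a small ball around the origin, Taylor-expand the non-quadratic part of the exponential and the polynomial $g$, and then integrate term-by-term using Lemma~\ref{lem:Laplace-basic}. The lower bound~\eqref{eq:gm} together with the integrability hypothesis~\eqref{eq:finite} will be what controls the tail.

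First I would write $e^{-P(x)/D}g(x) = e^{-a_0/D}\, e^{-Q(x)/D}\, e^{-R(x)/D}\, g(x)$, so that after multiplying by $D^{-N/2}e^{a_0/D}$ the task is to analyze $D^{-N/2}\int_{\R^N} e^{-Q(x)/D} e^{-R(x)/D} g(x)\,\d x$. Set $\delta := D^{1/3}$ as in Lemma~\ref{lem:Laplace-basic-2}. On the tail $|x|\ge\delta$, I would split $P - a_0 = \tfrac12(P-a_0) + \tfrac12(P-a_0)$ and use~\eqref{eq:gm} on one half to get a factor $e^{-\frac{\mu}{2}\min\{\delta^2,1\}/D} = e^{-\frac{\mu}{2}D^{-1/3}}$ (for $D$ small), while the other half keeps $e^{-(P-a_0)/(2D)}|g|$, which is bounded by the finite integral in~\eqref{eq:finite} with $D$ replaced by $2$ once $D\le 1$; hence the tail contributes an exponentially small $\mathcal{O}(e^{-cD^{-1/3}})$, absorbed into every $\mathcal{O}(D^{n+1})$.

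On the ball $|x|<\delta$ I would expand $e^{-R(x)/D}$. Since $R$ has only terms of degree $\ge 3$, on $|x|<\delta = D^{1/3}$ each monomial $x^\gamma$ with $|\gamma| = m\ge 3$ satisfies $|x^\gamma|/D \le \delta^m/D = D^{m/3 - 1}$, which is $\le 1$ and in fact small; so $R(x)/D$ is uniformly small on the ball and $e^{-R(x)/D} = \sum_{j\ge 0} \frac{(-R(x)/D)^j}{j!}$ converges, and truncating at a suitable order $J = J(n)$ leaves a remainder that is $\mathcal{O}(D^{n+1})$ after integration (one needs $J$ large enough that the leftover powers of $D$ beat $D^{n+1}$, using $m/3-1 \ge \text{(something positive)}$ for $m\ge 4$ and handling the borderline degree-$3$ terms by noting they come in even-total-degree combinations or get killed by the $M_\beta = 0$ parity). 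Multiplying the truncated series for $e^{-R/D}$ by the polynomial $g(x) = \sum g_\beta x^\beta$ gives a finite sum of terms of the form $c\, D^{-j} x^\gamma$; applying Lemma~\ref{lem:Laplace-basic} to $D^{-N/2}\int_{|x|<\delta} e^{-Q(x)/D} x^\gamma\,\d x$ — up to the exponentially small error from extending back to $\R^N$, which is again negligible — yields $D^{-N/2} \cdot D^{(N+|\gamma|)/2}\big(\prod_i \lambda_i^{-(1+\gamma_i)/2}\big) M_\gamma = D^{|\gamma|/2}\big(\prod_i \lambda_i^{-(1+\gamma_i)/2}\big) M_\gamma$, and $M_\gamma$ vanishes unless every $\gamma_i$ is even, i.e. unless $|\gamma|$ is even, so the surviving contributions carry integer powers $D^{|\gamma|/2 - j}$. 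Collecting these by the value of $\ell := |\gamma|/2 - j$ gives the claimed expansion $\big(\prod\lambda_k^{-1/2}\big)M_0\,g(0) + \sum_{i=1}^n K_i D^i + \mathcal{O}(D^{n+1})$: the $\ell = 0$ term comes from $j=0$, $\gamma = 0$ (giving $g(0)$) — all negative-$\ell$ contributions cancel because they would force $j > |\gamma|/2$ with $R$ of degree $\ge 3$ making $|\gamma|\ge 3j$, hence $|\gamma|/2 - j \ge j/2 \ge 0$, so there are no negative powers — and tracking which $\gamma$ and $j$ contribute to a given $\ell$ shows $K_i$ depends only on coefficients of $g$ of degree $\le 2i$ and on the coefficients of $P$, and vanishes if $g$ has no terms of degree $\le 2i$.

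The main obstacle I expect is the bookkeeping in the last step: proving cleanly that no negative powers of $D$ appear and that the finite truncation of $e^{-R/D}$ really produces an $\mathcal{O}(D^{n+1})$ remainder rather than something merely $o(1)$. The key quantitative fact making this work is that on $|x| < D^{1/3}$ a degree-$m$ monomial divided by $D$ is $\mathcal{O}(D^{m/3-1})$ with $m/3 - 1 > 0$ for $m\ge 4$, so only the degree-$3$ part of $R$ is genuinely borderline, and after integration against $e^{-Q/D}$ the parity constraint from $M_\gamma$ promotes everything to integer powers of $D$; combined with the continuity of all constants on $\mu$ and the coefficients (which is inherited from Lemmas~\ref{lem:Laplace-basic} and~\ref{lem:Laplace-basic-2}), this closes the argument.
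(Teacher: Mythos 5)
Your proposal is correct and is essentially the paper's own proof: the same cut at $|x|=D^{1/3}$, the same tail estimate combining \eqref{eq:gm} (on half of the exponent) with the integrability hypothesis \eqref{eq:finite}, the same truncated expansion of $e^{-R(x)/D}$ integrated term by term via Lemmas \ref{lem:Laplace-basic} and \ref{lem:Laplace-basic-2}, and the same parity/degree count ($M_\gamma=0$ for odd indices, $|\gamma|\geq 3j$) yielding nonnegative integer powers of $D$, the leading term $\bigl(\prod_k\lambda_k^{-1/2}\bigr)M_0\,g(0)$, and the stated properties of the $K_i$. The only point where you are vaguer than the paper is the truncation of the exponential series: the paper keeps $2n+1$ terms and bounds the remainder in absolute value by $C|x|^{6n+6}/D^{2n+2}$, which after Gaussian integration at scale $\sqrt{D}$ is $\mathcal{O}(D^{n+1})$ --- it is this gain of $D^{1/2}$ per degree of the monomial (not the $M_\beta$ parity, which cannot be invoked for an absolute-value remainder) that disposes of the borderline cubic part of $R$, and it pins down your unspecified order $J(n)$ as $2n+1$.
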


\begin{remark}
  Condition \eqref{eq:gm} implies in particular that the global
  minimum of $P$ is attained at $x=0$. Moreover, it requires that the
  minimum be strict in a specific sense.
\end{remark}

\begin{proof}
  The constant term $a_0$ obviously gives the exponential factor
  $e^{-a_0/D}$, so me may assume that $a_0 = 0$. For $D \leq 1$ we
  choose $\delta := D^{1/3}$, and break the integration into the
  region inside the ball $B_\delta(0)$ and its complement.  We first
  show that the integration outside this ball is very small as
  $D \to 0$: using \eqref{eq:gm} and the inequality
  $\min(|x|^2,1)\geq \delta^2$ for $|x| \geq \delta=D^{1/3}$ and
  $D\leq 1$, then an argument very close to that in Lemma
  \ref{lem:Laplace-basic-2} shows that
  \begin{equation*}
    \int_{|x| > \delta}  e^{-\frac{P(x)}{D}} g(x) \d x
    \leq
    e^{-\frac{\mu \delta^2}{2D}}
    \int_{|x| > \delta}  e^{-\frac{P(x)}{2D}} |g(x)| \d x
    \leq
    e^{-\frac{\mu}{2} D^{-1/3}}
    \ird  e^{-\frac{P(x)}{2}} |g(x)| \d x
    =
    C_2     e^{-\frac{\mu}{2} D^{-1/3}}
  \end{equation*}
  valid for all $D \leq 1$. For the integral inside the ball of radius
  $\delta$, denote
  \begin{equation*}
    P(x) = Q(x) + R(x),
  \end{equation*}
  where $Q$ is the sum of all second-order terms of $P$ and $R$ is the
  sum of the remaining terms (of order greater than or equal to 3). We
  can expand $e^{-\frac{R(x)}{D}}$ to obtain, for $|x| \leq \delta$,
  \begin{equation}
    \label{eq:R-expansion}
    e^{-\frac{R(x)}{D}}
    =
    \sum_{i=0}^{2n+1} (-1)^i\frac{R(x)^i}{i! \,D^i}
    + \mathcal{O}\left( \frac{R(x)^{2n+2}}{D^{2n+2}} \right)
    =
    \sum_{i=0}^{2n+1} (-1)^i\frac{R(x)^i}{i! \,D^i}
    + \mathcal{O}\left( \frac{|x|^{6n+6}}{D^{2n+2}} \right),
  \end{equation}
  where the implicit constant depends only on a bound of $R$ by
  $|x|^3$ (which can be chosen as a continuous function of its coefficients).
  We then have, using Lemma \ref{lem:Laplace-basic-2} in order to
  estimate the remainder term,
  \begin{equation*}
    D^{-N/2} \int_{|x| \leq \delta}  e^{-\frac{P(x)}{D}} g(x) \d x
    =
    D^{-N/2} \int_{|x| \leq \delta}
    e^{-\frac{Q(x)}{D}}
    \left(
    \sum_{i=0}^{2n+1} (-1)^i\frac{R(x)^i}{i!\,D^i}
    \right)    g(x) \d x
    +
    \mathcal{O}\left( D^{n+1} \right).
  \end{equation*}
  We can use again Lemma \ref{lem:Laplace-basic-2} to estimate each
  term, since each of them is a quadratic exponential times a
  polynomial. Now, let us remember that $M_\beta=0$ in Lemma
  \ref{lem:Laplace-basic-2} whenever there is an index $k$ such that
  $\beta_k$ is odd. This suggests rewriting the polynomial in the
  integrand as
  $$
  \left(
    \sum_{i=0}^{2n+1} (-1)^i\frac{R(x)^i}{i!\,D^i}
  \right)    g(x) = \sum_{i=0}^{2n+1} \frac{r_e^i(x)}{D^{i}}
    + \sum_{i=0}^{2n+1} \frac{r_o^i(x)}{D^{i}}
  $$
  where the $r_e^i(x)$ are even polynomials (all their monomials are
  even) and $r_o^i(x)$ are odd polynomials. Therefore, Lemma
  \ref{lem:Laplace-basic-2} gives that
  \begin{equation*}
    D^{-N/2} \int_{|x| \leq \delta}  e^{-\frac{P(x)}{D}} g(x) \d x
    =
    D^{-N/2} \int_{|x| \leq \delta}
    e^{-\frac{Q(x)}{D}}
    \left(
    \sum_{i=0}^{2n+1} \frac{r_e^i(x)}{D^{i}}
    \right)  \d x
    +
    \mathcal{O}\left( D^{n+1} \right).
  \end{equation*}
  Now, we can identify the full expansion in powers of $D$. The only
  term that contains terms of the lowest order is the first one:
  \begin{equation*}
    D^{-N/2}
    \int_{|x| \leq \delta}
    e^{-\frac{Q(x)}{D}}
    g(x) \d x
    =
    \left( \prod_{i=1}^N \lambda_i^{-1/2} \right)
    M_0\, g(0)
    +
    \mathcal{O}(D).
  \end{equation*}
  The rest of the terms have order strictly higher than this, and it
  is seen from Lemma \ref{lem:Laplace-basic} that their coefficients
  are continuous functions of the coefficients of $P$ and $g$. We now
  observe that only integer powers of $D$ will appear in the expansion
  due to the evenness of the polynomials in the remainder and Lemma
  \ref{lem:Laplace-basic}, note that $|\beta|$ is even for all the
  monomials in the expansion. One also sees that if $g$ contains no
  terms of degree lower than or equal to $i$ then every term in the
  expansion is of order at least $D^{N/2} D^{i+1}$, and hence the
  coefficient $K_i$ is equal to 0.
\end{proof}

\subsection{$D\to 0$ Case: Multiple Solutions}
\label{S:anyDimension}

We will now show the existence of a curve of nonsymmetric stationary
states emanating from the stationary states for $D=0$ (which are the
measures $\delta_{u}$, for any $u \in \R^N$ with $|u| = 1$). Since
stationary states are determined by the roots of equation
\eqref{defH}, we are interested in the behavior of $H(u,D)$ as
$D \to 0$. The parameter $1/D$ appears inside the exponential, and the
asymptotics of integrals of this form is given by Laplace's method,
particularly by the statements given in the previous section.

Intuitively, we expect the stationary distribution to approach a Dirac
delta at the minimum of the polynomial $P_u(v)$ from eq. \eqref{poly}
as $D\to 0$ (this will be rigorously justified by Lemma
\ref{bigkahuna} as we will see below). Let us assume for the moment
that for $u > 0$ there is a unique minimum of $P_u(v)$ that is
achieved at $v = v^*(u)$ with $v_k^*(u)=0$, $k>1$ (we will also come
back to this point next to check that this assumption is
met). Therefore, if we want to find $u>0$ such that $H(u,0)=0$, we can
compute $H(u,0)$ formally at this point by substituting $f$ by a Dirac
delta at $v^*(u)$ in \eqref{defH}, leading to $H(u,0)=v_1^*(u)-u$, and
thus $u=v_1^*(u)$ at a point where $H(u,0)=0$. Now, since $u$ should
be equal to the point $v_1^*(u)$ at which the global minimum of $P_u(v)$ is
achieved, then $u=v_1^*(u)$ must satisfy $u^3=u$ according to the critical point condition
$\tfrac{\partial P_u}{\partial v_1}=\alpha v_1^3 + (1-\alpha) v_1 - u = 0$. Formally then the unique positive solution to $H(u,0) = 0$ is $u=1$. 

In this section, we will rigorously justify this, and follow a
perturbative argument to show that there is curve of solutions of
$H(u,D) = 0$ that converges to $(1,0)$ as $D \to 0$.

\paragraph{Global minima of $P_u$.}
Let us first find the minima of the polynomial $P_u(v)$. Its gradient
is
$$\nabla_v P_u(v) = \alpha |v|^2 v + (1-\alpha) v - u e_1.$$ The
critical points of $P_u(v)$ are thus characterized by
$\nabla_v P_u(v) \cdot e_k = 0$ for all $k=1,\dots,N$ (where
$\{e_1,\dots,e_N\}$ is the usual base of $\R^N$). That is,
\begin{align}
  \nabla_v P_u(v) \cdot e_k
  &=  v_k \left[ \alpha |v|^2 + (1-\alpha) \right] = 0
    \qquad (k = 2, \dots, N),
    \nonumber
  \\
  \nabla_v P_u(v) \cdot e_1
  &= v_1 \left[ \alpha |v|^2 + (1-\alpha)
               \right]-u = 0,
    \label{E:nabla1}
\end{align}
which means that either $v_k=0$ for all $k\neq 1$, or
$|v|^2=\frac{\alpha-1}{\alpha} = 1-\frac{1}{\alpha}$. For $u > 0$ we
cannot find critical points for which $|v|^2=1-\frac{1}{\alpha}$ due
to \eqref{E:nabla1}. Hence, in the case $u > 0$, critical points must
satisfy $v_k=0$ for all $k \neq 1$ and for all $\alpha>0$. Therefore,
in the case $u > 0$ all the critical points $v = (v_1,\dots,v_N)$
satisfy
\begin{equation}
  \label{eq:cpc}
  \alpha v_1^3 + (1-\alpha) v_1 - u = 0.
\end{equation}
The case $u=1$ can be explicitly solved since
$\alpha v_1^3 + (1-\alpha) v_1 - 1 = (\alpha v_1^2 + \alpha v_1
+1)(v_1-1)=0$,
so the roots are $v_1=1$ and
$v_1=\tfrac{-1 \pm \sqrt{\alpha - 4}}{2}$. It is simple to check that
$v_1 = 1$ is the unique global minimum of $P_1(v)$. In general, for
any $u > 0$ it can be seen that \eqref{eq:cpc} has one positive root
$v_1 = v_1^*(u)$, and the remaining roots are either complex or
negative, depending on the values of $\alpha$ and $u$ (this can be
checked by differentiating again in $v_1$). Since $u>0$, it is easy to check that
$P_u(-v_1,v_2,\dots,v_N) > P_u(v_1,v_2,\dots,v_N)$ for all
$v_1 > 0$, then the global minimum of $P_u(v)$ must be attained only at
$v = v^*(u) = (v_1^*(u),0,\dots,0)$. We have then proved the
following:
\begin{lem}[Global minimum of $P_u$]
  For $u > 0$ and $\alpha > 0$ the polynomial $P_u$ attains its global
  minimum only at
  \begin{equation*}
    v^*(u) = (v_1^*(u), 0, \dots, 0),
  \end{equation*}
  where $v_1^* = v_1^*(u)$ is a continuous function of $u > 0$, is positive
  for $u > 0$, and $v_1^*(1) = 1$.
\end{lem}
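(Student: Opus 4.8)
The plan is to analyze the cubic critical point equation \eqref{eq:cpc}, namely $q(v_1) := \alpha v_1^3 + (1-\alpha) v_1 - u = 0$, and extract from it all the claimed properties of $v_1^*(u)$. First I would establish existence and uniqueness of a positive root: since $q(0) = -u < 0$ for $u>0$ and $q(v_1) \to +\infty$ as $v_1 \to +\infty$, the intermediate value theorem gives at least one positive root. For uniqueness I would distinguish two cases according to the sign of $1-\alpha$. If $\alpha \le 1$ then $q'(v_1) = 3\alpha v_1^2 + (1-\alpha) > 0$ for all $v_1 > 0$, so $q$ is strictly increasing on $(0,\infty)$ and the positive root is unique. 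If $\alpha > 1$ then $q'$ vanishes at $v_1 = \pm\sqrt{(\alpha-1)/(3\alpha)}$; on $(0,\infty)$ the function $q$ decreases then increases, so it still crosses zero exactly once on the positive axis (it is already negative at $0$ and at its interior minimum, hence the single crossing occurs after the minimum). This yields a well-defined $v_1^*(u) > 0$.

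Next I would show that $P_u$ has no global minimum with a negative first coordinate. As the excerpt already notes, for $u > 0$ any critical point has $v_k = 0$ for $k \ge 2$, so it suffices to compare values of the one-variable function $p(s) := P_u(s, 0, \dots, 0) = -\alpha(\tfrac{s^2}{2} - \tfrac{s^4}{4}) + \tfrac{s^2}{2} - us$ at $s$ and $-s$ for $s>0$. Since the even part of $p$ is identical at $\pm s$, the difference is $p(-s) - p(s) = 2us > 0$. Hence any negative or complex critical point is strictly worse than the positive one, and since $p(s) \to +\infty$ as $|s| \to \infty$, the global minimum of $P_u$ is attained only at $v^*(u) = (v_1^*(u), 0, \dots, 0)$.

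For continuity of $v_1^*$ in $u$, I would invoke the implicit function theorem: at $v_1 = v_1^*(u)$ we have $q'(v_1^*(u)) = 3\alpha (v_1^*(u))^2 + (1-\alpha)$, and I would check this is nonzero. When $\alpha \le 1$ it is automatically positive; when $\alpha > 1$, the analysis above shows $v_1^*(u)$ lies strictly to the right of the minimizer $\sqrt{(\alpha-1)/(3\alpha)}$, where $q' > 0$ (the minimizer is where $q'=0$, and $q$ is strictly increasing thereafter). So $q'(v_1^*(u)) > 0$ and the implicit function theorem gives a $C^\infty$, in particular continuous, local branch; by uniqueness of the positive root these branches patch together to a single continuous (indeed smooth) function on $(0,\infty)$. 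Finally, $v_1^*(1) = 1$ follows from the explicit factorization $\alpha v_1^3 + (1-\alpha)v_1 - 1 = (\alpha v_1^2 + \alpha v_1 + 1)(v_1 - 1)$ quoted in the excerpt, since the quadratic factor $\alpha v_1^2 + \alpha v_1 + 1$ has no positive root (all coefficients positive), so $v_1 = 1$ is the unique positive root; that it is the global minimum follows from the previous paragraph.

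The only mildly delicate point is the case $\alpha > 1$, where one must locate the positive root relative to the local minimizer of the cubic to simultaneously get uniqueness and the sign $q' > 0$ needed for the implicit function theorem; everything else is an elementary sign analysis of a cubic. I expect this case distinction to be the main (and still quite minor) obstacle.
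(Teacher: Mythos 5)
Your proposal is correct and follows essentially the same route as the paper: reduce to the cubic critical-point equation \eqref{eq:cpc} via the gradient computation, use the reflection inequality $P_u(-v_1,v_2,\dots,v_N) > P_u(v_1,v_2,\dots,v_N)$ to rule out minima with nonpositive first coordinate, obtain $v_1^*(1)=1$ from the explicit factorization, and get continuity from the dependence of the unique positive root on $u$. You merely fill in details the paper leaves as ``it can be seen'' (the $\alpha \le 1$ versus $\alpha > 1$ case analysis for uniqueness of the positive root, and the implicit function theorem with $q'(v_1^*(u))>0$ for continuity), which is consistent with, not different from, the paper's argument.
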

Of course, $v^*$ also depends on $\alpha$, but we omit this dependence
in the notation since $\alpha$ is fixed in all arguments below. Notice
that the continuity in $u$ of $v^*(u)$ is a consequence of the
continuity in $u$ of the unique positive root of \eqref{eq:cpc}.

\paragraph{A decomposition of $P_u$.}
For $u > 0$ we can write our polynomial $P_u(v)$ as
\begin{equation}\label{pol1}
P_u(v)=a_0(u)+Q_u(v)+R_u(v)
\end{equation}
with $a_0(u)=P_u(v^*(u))$ and 
\begin{equation}\label{pol2}
Q_u(v)= \langle \Lambda(v-v^*(u)),v-v^*(u) \rangle = \bar Q_u(v-v^*(u))\,,
\end{equation}
where $\Lambda$ is the Hessian matrix of $P_u(v)$ at the global
minimum $v^*(u)$. It can be calculated in terms of $v_1^*$ as
\begin{gather}
  \label{pol3}
  \Lambda=\operatorname{diag}(\lambda_1,\dots,\lambda_N) \quad \mbox{with }
  \lambda_1=\frac{1-\alpha}{2} + \frac32 \alpha v_1^*(u)^2
  \,\,\text{ and }\,\,
  \lambda_i = \frac{1-\alpha}{2} + \frac12\alpha v_1^*(u)^2, \, (i=2,\dots,N).
\end{gather}
Since $P_u(v)$ is of degree at most 4, the remainder $R_u(v)$ is of
the form
$$
R_u(v)=\sum_{|\beta|=3} a_\beta(u) (v-v^*(u))^\beta
+ \sum_{|\beta|=4} a_\beta(u) (v-v^*(u))^\beta,
$$
where the sum is over multiindices of the given order. In particular,
at $u=1$ we get
\begin{gather}
  \label{eq:a0Q1}
  a_0(1)=-\tfrac{\alpha+2}4\,,
  \qquad Q_1(v)= \tfrac12 |v-e_1|^2+\alpha
  (v_1-1)^2 = \bar Q_1(v-e_1),
  \\
  \label{eq:R1}
  R_1(v)=\alpha |v-e_1|^2
  (v_1-1) + \tfrac{\alpha}{4} |v-e_1|^4\,.
\end{gather}
\paragraph{Auxiliary functions.}  In order to make use of Theorem
\ref{bigkahuna} here and in later sections, let us define the
functions
$$
F_k(u,D) = \int_{\R^N} (v_1-v_1^*(u))^k
\exp\left\{-\frac{P_u(v)}{D}\right\} \d v,
$$
with $k=0,1,2$. Applying Theorem \ref{bigkahuna} to $F_k(u,D)$ we conclude that 
\begin{equation}\label{lap1}
F_0(u,D)= e^{-a_0(u)/D} D^{N/2} (c_0(u)+\mathcal{O}(D))\,,
\end{equation}
\begin{equation}\label{lap2}
F_1(u,D)= e^{-a_0(u)/D} D^{N/2} (c_1(u)D+\mathcal{O}(D^2))
\end{equation}
and
\begin{equation}\label{lap3}
F_2(u,D)= e^{-a_0(u)/D} D^{N/2} (c_2(u)D +\mathcal{O}(D^2))\,,
\end{equation}
for $u>0$ as $D\to 0$, where $c_k(u)$, $k=0,1,2$, are continuous
functions of $u$, and the constants implicit in the $\mathcal{O}$
notation are uniform in a neighborhood of $u = 1$ (one can check that
all conditions in Theorem \ref{bigkahuna} hold uniformly in a
neighborhood of $u=1$). The explicit expression of the first term in
the expansion in Theorem \ref{bigkahuna} gives
\begin{equation}
  \label{eq:c01}
  c_0(1)=\int_{\R^N} e^{-\bar Q_1(z)}\dz = (2\pi)^{N/2} \sqrt{\frac1{1+2\alpha}}\,,
\end{equation}
and thus by continuity we have $c_0(u) \neq 0$ in a neighborhood of
$u=1$. For reference below, we take $\epsilon_0 > 0$ such that
\eqref{lap1}-\eqref{lap3} hold for $|u-1| < \epsilon_0$.  Analogously,
we can use the expansion in Theorem \ref{bigkahuna} to obtain the
first order term of the functions $F_1(u,D)$ and $F_2(u,D)$ at $u=1$
to get
\begin{equation}
  \label{eq:c11}
  c_1(1)= - \alpha \int_{\R^N} z_1^2 |z|^2 e^{-\bar Q_1(z)}\dz
  =
  -\alpha (2\pi)^{N/2} (1+2\alpha)^{-5/2}
  \left(N + 2 + 2 (N-1) \alpha
  \right) < 0
\end{equation}
and
$$
c_2(1)=\int_{\R^N} z_1^2 e^{-\bar Q_1(z)}\dz = (2\pi)^{N/2} \left(\frac1{1+2\alpha}\right)^{3/2}\,.
$$
(See Appendix \ref{sec:appendix} for the explicit calculations leading
to this.)

\paragraph{Continuity of $H$ as $D \to 0$.}
The function $H(u,D)$ is smooth with respect to $u$ and $D$ for all
$u>0$ and $D>0$, as can be seen by standard arguments. Let us show
that $H(u,D)$ has a limit as $D \to 0$ (which will enable us to define
it by continuity at $D=0$). It is easy to verify the following
formulas that relate $H$ to $F_0$ and $F_1$:
\begin{equation}\label{for1}
  Z= F_0(u,D)
  \qquad \mbox{and} \qquad
  H(u,D)=\frac{F_1(u,D)}{F_0(u,D)}+(v_1^*(u)-u)\,.
\end{equation}
We deduce from \eqref{for1} taking into account \eqref{lap1} and
\eqref{lap2} that
$$
\lim_{D\to 0} H(u,D)= v_1^*(u)-u \qquad \mbox{since}\quad \frac{F_1(u,D)}{F_0(u,D)}=\frac{c_1(u)}{c_0(u)} D+\mathcal{O}(D^2)\,,
$$
for $|u-1| < \epsilon_0$. As a consequence, by defining
\begin{equation}
  \label{extendH}
  H(u,0)=v_1^*(u)-u
\end{equation}
the function $H(u,D)$ is continuous in
$(u-\epsilon_0, u+\epsilon_0) \times [0,+\infty)$.

\paragraph{Differentiability in $u$.}

It is straightforward to check that
\begin{equation}\label{for2}
\frac{\partial H}{\partial u}(u,D)=\frac1D \frac{F_0(u,D)F_2(u,D)-F_1(u,D)^2}{F_0(u,D)^2}-1
\end{equation}
We proceed as before: applying \eqref{lap1}-\eqref{lap3} in
\eqref{for2} we obtain
$$
\lim_{D\to 0} \frac{\partial H}{\partial u}(u,D)= \frac{c_2(u)}{c_0(u)}-1 \qquad \mbox{since}\quad \frac{F_0(u,D)F_2(u,D)-F_1(u,D)^2}{F_0(u,D)^2}=\frac{c_2(u)}{c_0(u)} D+\mathcal{O}(D^2)\,,
$$
for $|u-1|<\varepsilon_0$. This shows the function $H(u,D)$ (extended
to $D=0$ as in \eqref{extendH}) is
differentiable with respect to $u$ in a neighborhood of $(1,0)$. It
is simple to check that
\begin{equation}\label{finally}
\frac{\partial H}{\partial u}(1,0)= \frac{c_2(1)}{c_0(1)}-1 = \frac1{1+2\alpha}-1=-\frac{2\alpha}{1+2\alpha}\neq 0\,.
\end{equation}
This comes again from the explicit computation of the first term in
the expansion of Theorem \ref{bigkahuna} applied to $F_2(u,D)$ which is
given by the second moment centered at $v_1^*(1)=1$ of $Q_1(v)$.

\paragraph{Differentiability in $D$.}
In a similar way we can write
\begin{align}
\frac{\partial H}{\partial D}(u,D)=\frac1{D^2\,F_0(u,D)} &\left( \int_{\R^N} (v_1-u) P_u(v) \exp\left\{-\frac{P_u(v)}{D}\right\} dv \right.\nonumber\\
&\left.- H(u,D) \int_{\R^N} P_u(v) \exp\left\{-\frac{P_u(v)}{D}\right\} dv \right)\,.
\label{for31}
\end{align}
Inserting \eqref{for1} into \eqref{for31}, this is equivalently written
as
\begin{align}
\frac{\partial H}{\partial D}(u,D)=\frac1{D^2\,F_0(u,D)^2} &\left( F_0(u,D) \int_{\R^N} (v_1-v_1^*(u)) P_u(v) \exp\left\{-\frac{P_u(v)}{D}\right\} dv \right.\nonumber\\
&\left.- F_1(u,D) \int_{\R^N} P_u(v) \exp\left\{-\frac{P_u(v)}{D}\right\} dv \right),
\label{for32}
\end{align}
and with the decomposition \eqref{pol1} we get the expression
\begin{align}
\frac{\partial H}{\partial D}(u,D)=\frac1{D^2\,F_0(u,D)^2} &\left( F_0(u,D) \int_{\R^N} (v_1-v_1^*(u)) (Q_u(v)+R_u(v)) \exp\left\{-\frac{P_u(v)}{D}\right\} dv \right.\nonumber\\
&\left.- F_1(u,D) \int_{\R^N} (Q_u(v)+R_u(v)) \exp\left\{-\frac{P_u(v)}{D}\right\} dv \right)\,.
\label{for33}
\end{align}
Applying Theorem \ref{bigkahuna} to the two integrals in \eqref{for33}
to obtain
$$
\int_{\R^N} (v_1-v_1^*(u)) (Q_u(v)+R_u(v)) \exp\left\{-\frac{P_u(v)}{D}\right\} dv = e^{-a_0(u)/D} D^{N/2} (k_1(u) D^2+\mathcal{O}(D^3))
$$
and
$$
\int_{\R^N} (Q_u(v)+R_u(v)) \exp\left\{-\frac{P_u(v)}{D}\right\} dv = e^{-a_0(u)/D} D^{N/2} (k_2(u) D+\mathcal{O}(D^2))
$$
as $D\to 0$ where $k_1(u)$ and $k_2(u)$ are continuous functions for
$|u-1|<\varepsilon_0$. In fact, using the expansion in Theorem
\ref{bigkahuna} we obtain
\begin{equation}
  \label{eq:k11}
 k_1(1)=\alpha\int_{\R^N} z_1^2 |z|^2 \left(1-\bar Q_1(z)\right) e^{-\bar Q_1(z)}\dz
\end{equation}
and
$$
 k_2(1)=\int_{\R^N} \bar Q_1(z) e^{-\bar Q_1(z)}\dz
          = \frac{(2 \pi)^{N/2} N}{2 \sqrt{1+2\alpha}} \,,
$$
whose expressions are derived in Appendix \ref{sec:appendix}.
These expressions, together with the formulas \eqref{lap1} and
\eqref{lap2}, result in
\begin{equation}
  \label{eq:pHpD}
  \lim_{D\to 0} \frac{\partial H}{\partial D}(u,D)= \frac{c_0(u)
    k_1(u)-c_1(u) k_2(u)}{c_0(u)^2}
\end{equation}
for $|u-1|<\varepsilon_0$. This shows that the function $H(u,D)$ is
differentiable from the right with respect to $D$ in a neighborhood
of $(1,0)$. Moreover, we find that
$$
\frac{\partial H}{\partial D}(1,0) =  -\frac{\alpha}{(1+2\alpha)^2}
    \left(
           3+(N-1) (1 + 2 \alpha)
  \right)<0\,,
$$
for all $\alpha >0$, according to the values in Appendix \ref{sec:appendix}.

\paragraph{Existence of a curve of solutions close to $(u,D) =
  (1,0)$.}
Summarizing, we have proved that the function $H(u,D)$ is a
differentiable ($\mathcal{C}^1$) function in a neighborhood of
$(1,0)$, such that $H(1,0)=0$. Equation \eqref{finally} implies that $\tfrac{\partial H}{\partial u}(1,0)\neq 0$ which allows us to
apply the implicit function theorem, implying that there exists a
curve $u=u^*(D)$ defined for $D$ small enough such that
$H(u^*(D),D)=0$. This shows the existence of a curve of non-symmetric
stationary states emanating from the Dirac delta at $v=e_1$ for $D=0$.


\section{Numerical Results}\label{S:numerics}

In this section we numerically validate the results of the previous
section, finding the bifurcation curves and numerically showing the
stability of the stationary solutions.  In addition to demonstrating
the analytical results related to the parameter $D$, we explore the
effect of the value of the parameter $\alpha$ on the critical noise
threshold and the effect of both parameters on the stationary velocity
profile.  To emphasize the dependence of $H(u,D)$ on the value of the
parameter $\alpha$ we will use the notation $H_\alpha(u,D)$ throughout
this section.  By examining the roots of $H_\alpha(u,D)$, we
numerically validate in both one and two dimensions the fact that
there is more than one stationary state for small magnitudes of noise,
and that there is only one stationary solution for large noise.  Using
both $H_\alpha(u,D)$ and $\frac{\partial H_\alpha}{\partial u}(0,D)$,
we show where in $\alpha$-$D$ parameter space the transition from more
than one to one stationary solutions occurs.  We then numerically
explore the $\alpha \to \infty$ limit.

To examine further properties of the system we use a Monte Carlo-like
particle method to approximate the steady states and the transient
behavior of the system, employing the Euler-Maruyama method to
numerically solve the SDEs, see \cite{KloedenPlaten} for instance.
With this framework, we are able to numerically validate the stability
of the nonzero stationary solution when it exists, giving evidence
that this bifurcation is indeed a phase transition.  Using a large
ensemble of independent runs, we also track the temporal evolution of
both the average velocity and the free energy $\mathcal{F}$ defined in
Section \ref{S:homogeneousProblem}.

In order to efficiently compute the stationary states and the
bifurcation diagram in two dimensions, we use radial coordinates. In
fact, we can rewrite the function $H_\alpha(u,D)$ in radial
coordinates in any dimension as follows. Defining
$$
E_D(r)= \exp\left( \tfrac{\alpha}{D} (\tfrac{r^2}{2} - \tfrac{r^4}{4}) - \tfrac{r^2}{2D}) \right)\,
$$
we can reexpress $H_\alpha(u,D)$ in coordinates  in terms of the angle with respect to the first axis as:
\begin{align*}
H_\alpha(u,D) &= \frac{1}{Z} \int_0^\infty r^{N-1}E_D(r)\int_{S^{N-1}} (r \omega_1 - u)  \exp \left( \tfrac{u r\omega_1}{D} \right) \omega \,d\theta \,dr\\
&=\frac{1}{Z} \int_0^\infty r^{N-1} E_D(r) \int_{0}^{\pi} \int_{S^{N-2}} (r \cos \theta - u)  \exp\left( \tfrac{u r\cos \theta}{D}\right) d\tilde\omega \sin^{N-2}\theta \,d\theta \,dr\\
&= \frac{\int_0^\infty r^{N} E_D(r) \int_{0}^{\pi} \cos \theta \exp\left( \tfrac{u r\cos \theta}{D}\right) \sin^{N-2}\theta \,d\theta \,dr}{\int_0^\infty r^{N-1} E_D(r) \int_{0}^{\pi} \exp\left( \tfrac{u r\cos \theta}{D}\right) \sin^{N-2}\theta \,d\theta \,dr}-u.
\end{align*}
Let us use the notation
\begin{equation}\label{bessel1}
\mathcal{I}_n^N(z) = \int_{0}^{\pi} \cos^n(\theta) \exp(z \cos(\theta)) \sin^{N-2}\theta \,d\theta.
\end{equation}
We further reduce to:
\begin{align*}
H_\alpha(u,D) &= \frac{\int_0^\infty r^N  E_D(r)  \mathcal{I}_1^N \left( \frac{ur}{D}\right) dr}{\int_0^\infty r^{N-1}  E_D(r) \mathcal{I}_0^N \left( \frac{ur}{D} \right) dr} - u.
\end{align*}
Formula \eqref{bessel1} in the two-dimensional case leads to an
expression in terms of modified Bessel functions of the first kind and
in the three-dimensional case they can be obtained explicitly in terms
of hyperbolic sine and cosine functions. This is exploited in the
two-dimensional numerics of subsections \ref{S:bifurcations} and
\ref{S:roleOfAlpha}.  Note that these expressions do not simplify the
analytical discussion in Section 2 of the behavior of $H_\alpha(u,D)$
for large and small noise $D$.

\subsection{Bifurcations}\label{S:bifurcations}

We are able to numerically observe the bifurcations predicted by the
analysis in the previous section. In Figures \ref{F:dHduOverlap1D} and
\ref{F:dHduOverlap2D}, we show in one and two dimensions,
respectively, the root $u(D)$ of $H_\alpha(u,D)$ plotted as a function
of $D$ in solid lines. The curves were determined using a root-finding
function on $H_\alpha(u,D)$ for each fixed value of $D$. We also plot
$\frac{\partial H_\alpha}{\partial u}(u(D),D)$ as dotted lines for
varying values of $\alpha$. Here, the derivative is computed by
substituting the root $u(D)$ into the formula \eqref{for2} of
$\frac{\partial H_\alpha}{\partial u}(u,D)$, given in subsection 2.3.

Let us define the critical value of the noise $D_c$ as the noise at
which $u(D)$ attains zero for the first time. From Figures
\ref{F:dHduOverlap1D} and \ref{F:dHduOverlap2D}, it is clear that in
both one dimension and two dimensions,
$\frac{\partial H_\alpha}{\partial u}(u(D),D)$ is equal to zero at
$D_c$. This tells us that the bifurcation branch intersects the zero
branch perpendicularly, since
$$
\frac{\partial u}{\partial D}=
-\frac{\partial H_\alpha}{\partial D}\left(\frac{\partial H_\alpha}{\partial u}\right)^{-1}\,.
$$
This is unsurprising, given the form of $H_\alpha(u,D)$ shown in
figure \ref{F:Hofu}: the slope of $H_\alpha$ evaluated at the nonzero
stationary solution should be negative, becoming increasingly shallow
until it becomes zero. We also note that the bifurcation diagram is
decreasing for $D$ small as indicated by the formulas found in
previous section, since
$$
\frac{\partial u}{\partial D} (0)
= -\frac{\partial H_\alpha}{\partial D}(1,0)\left(\frac{\partial
    H_\alpha}{\partial u}(1,0)\right)^{-1}
= -\frac{3+(N-1) (1 + 2 \alpha)}{2(1+2\alpha)}<0\,.
$$

\begin{figure}
\begin{center}
\includegraphics[width=16cm]{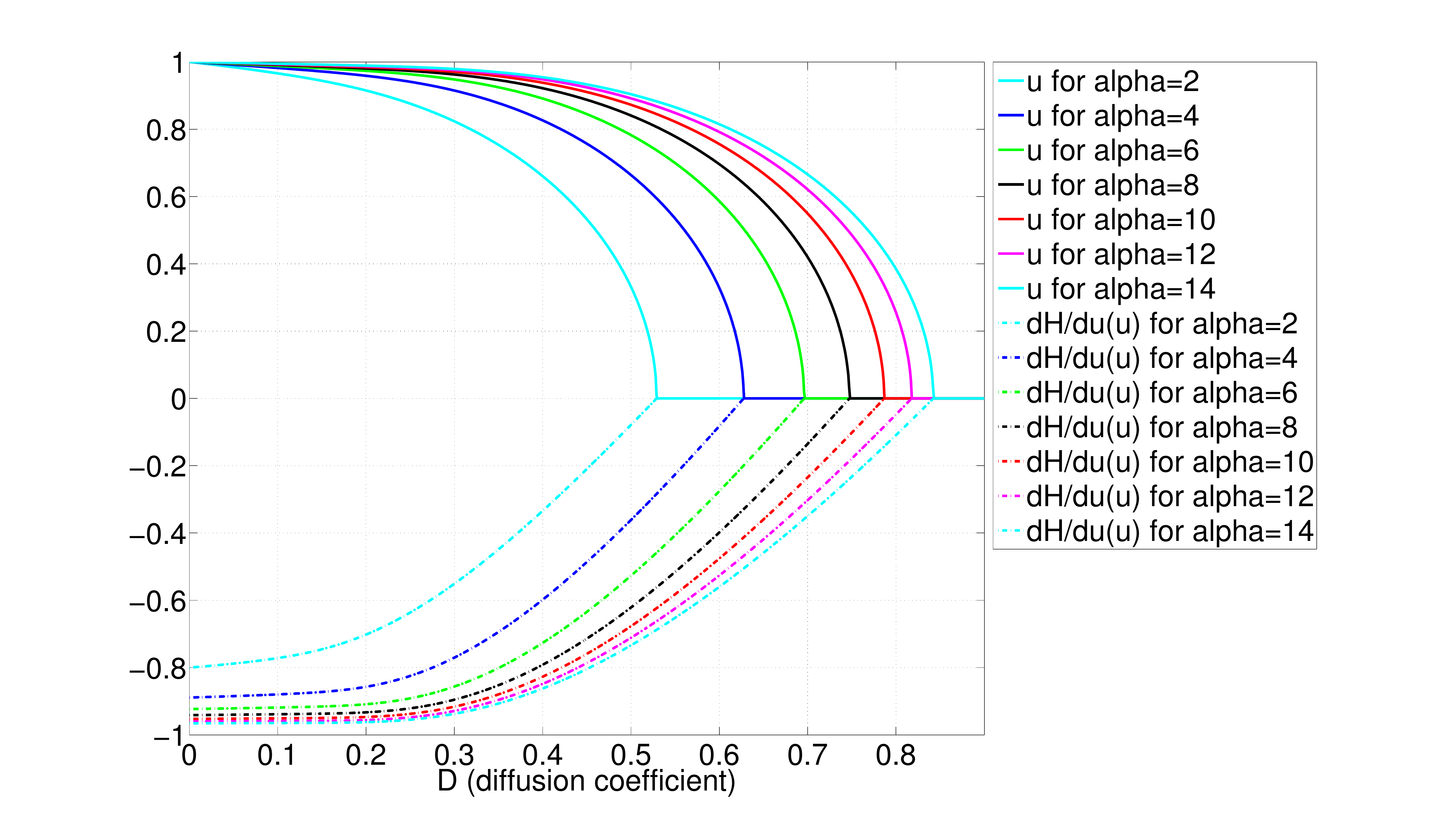}
\caption{One Dimension: Here, the solid lines are the roots $u(D)$ of
  $H_\alpha(u,D)$ in one dimension plotted against the diffusion
  coefficient $D$. The dotted lines are
  $\frac{\partial H_\alpha}{\partial u}(u(D),D)$, plotted as a
  function of $D$. The values of $\alpha$ are uniformly spaced with
  intervals of $2$ between $2$ and $14$.  This figure validates that
  the nonzero stationary solution disappears when
  $\frac{\partial H_\alpha}{\partial u}(u(D),D)$ reaches zero.}
\label{F:dHduOverlap1D}
\end{center}
\end{figure}

\begin{figure}
\begin{center}
\includegraphics[width=16cm]{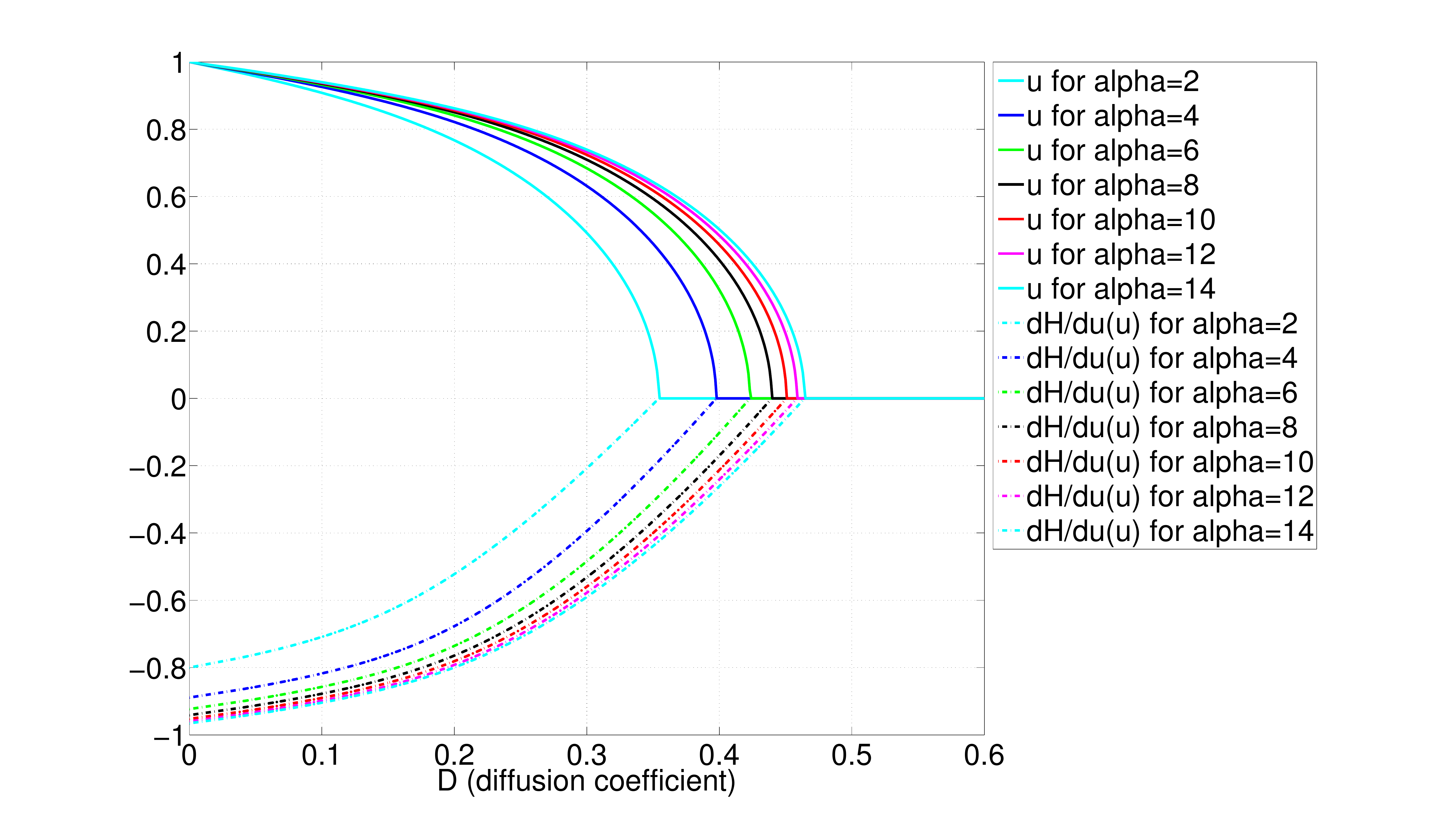}
\end{center}
\caption{Two Dimensions: Roots $u(D)$ of $H_\alpha(u,D)$ shown in
  solid lines and $\frac{\partial H_\alpha}{\partial u}(u(D),D)$ shown
  in dotted lines for equally spaced $\alpha$ varying from $2$ to $14$
  in two dimensions.  As can be seen, the roots of both $u(D)$ and
  $H_\alpha(u(D),D)$ coincide at the critical $D$.  Note also that
  this critical value is less than the critical value for the
  one-dimensional case seen in the previous figure.}
\label{F:dHduOverlap2D}
\end{figure}


\subsection{The role of $\alpha$} \label{S:roleOfAlpha}

According to the formulas \eqref{bessel1}, in two dimensions,
$I_0^2(0) = \pi$, $I_1^2(0) = 0$, and $I_2^2(0) = \frac{\pi}{2}$, so
\begin{align*}
F_0(0) = \pi \int_0^\infty r E_D(r) \,dr\qquad \mbox{and}\qquad
F_2(0) = \frac{\pi}{2} \int_0^\infty r^3 E_D(r) \,dr\,.
\end{align*}
Thus, in two dimensions,
\begin{align*}
  \frac{\partial H_\alpha}{\partial u}(0,D)
  &= \frac{1}{D} \frac{F_2(0)}{F_0(0)} - 1
    = \frac{2}{D} \frac{\int_0^\infty r \exp\left( \tfrac{\alpha}{D}
    (\tfrac{r^2}{2} - \tfrac{r^4}{4}) - \tfrac{r^2}{2D}) \right) \,dr}
    {\int_0^\infty r^3 \exp\left( \tfrac{\alpha}{D} (\tfrac{r^2}{2} - \tfrac{r^4}{4}) - \tfrac{r^2}{2D}) \right) \,dr} - 1.
\end{align*}
This calculation highlights the dependence of the bifurcation curve on
the parameter $\alpha$. We demonstrate this numerically in both one
and two dimensions in figures \ref{F:dHduOverlap1D} and
\ref{F:dHduOverlap2D}, respectively, where we can observe how the
bifurcation curves change as we vary $\alpha$.

In figure \ref{F:dHdu}, we numerically determine where in $\alpha$-$D$
parameter space this bifurcation occurs in one dimension.  This
bifurcation diagram was found both analytically and numerically by
Tugaut in \cite[Subsection 4.1]{Tugaut2013Phase}. Here, we sample the
parameter space, with $\alpha$ along the vertical axis and $D$ along
the horizontal one, and plot a blue dot when the point has more than
one stationary solution and a red dot where it has only one.  The
black line was drawn using the continuation method to find the root of
$\frac{\partial H_\alpha}{\partial u}(0,D)$.  It is clear from the
figure that for $D$ sufficiently large, we could consider $\alpha$ to
be the bifurcation parameter, identifying a critical value $\alpha_c$
at which a phase transition occurs. Unlike the case of increasing $D$,
this bifurcation is from one stationary solution to more than one as
$\alpha$ increases, and making this phase transition explicit remains
to be explored. We finally mention that we observe that the critical
value $D_c$ has a limit as $\alpha\to 0$, this fact was already
studied analytically in \cite[Subsection 4.1]{Tugaut2013Phase} showing
that its limit value is $1/3$ matching with our simulations.

\begin{figure}
\begin{center}
\includegraphics[width=10cm]{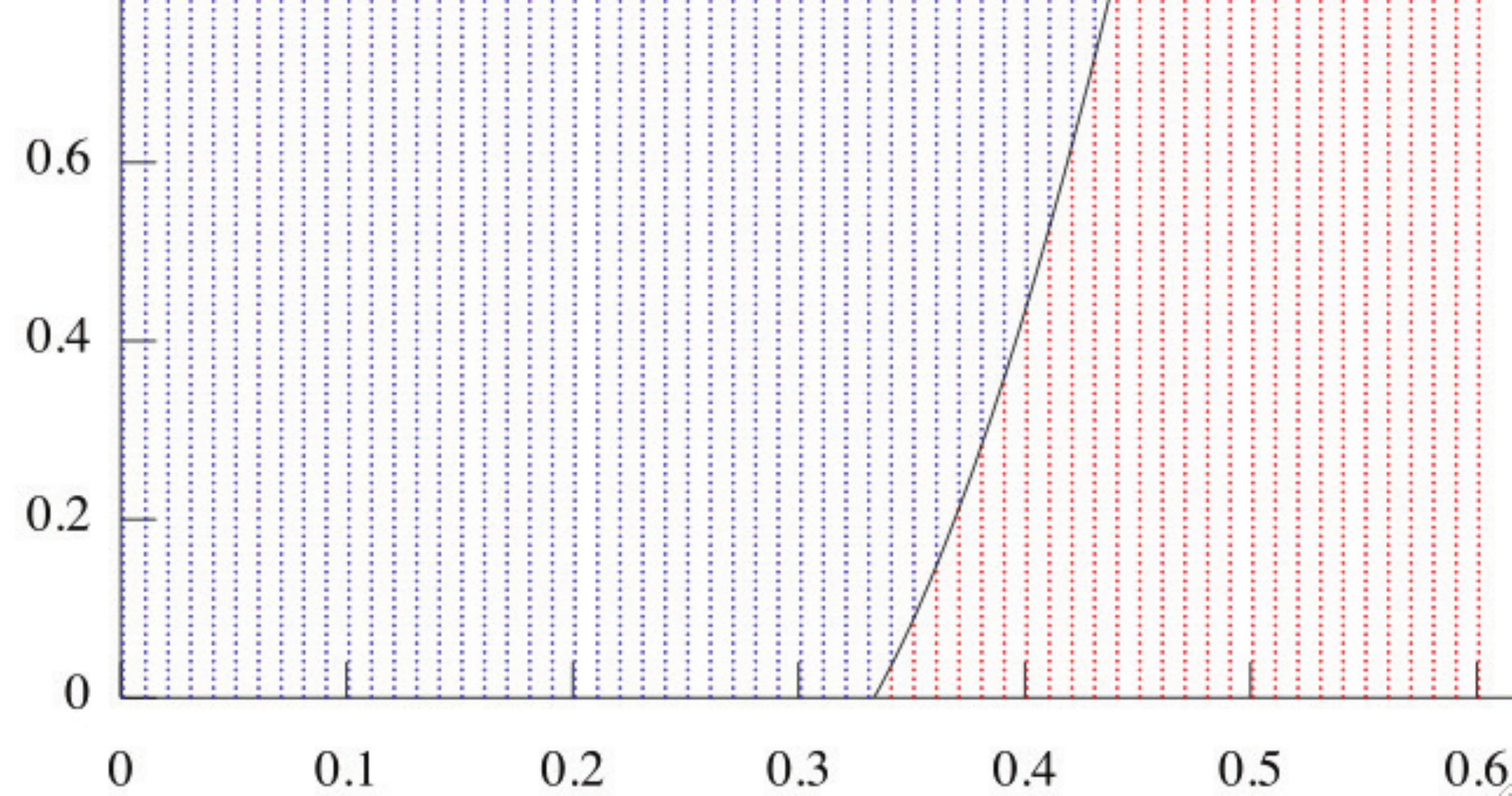}
\end{center}
\caption{This figure shows where in $\alpha$-$D$ space we can expect
  the bifurcation to occur.  The vertical axis is $\alpha$ and the
  horizontal axis is $D$.  We plot points in parameter space where
  there exists a nonzero stationary state in red, and points where we
  find only the zero stationary state in blue.  The line of
  demarcation between the two regions is created using a continuation
  method to find the root of
  $\frac{\partial{H_\alpha}}{\partial u}(0,D)$.  Taken together, these
  roots, which determine for which $D$ the slope of $H_\alpha$ at zero
  changes from positive to negative, define the line.}
\label{F:dHdu}
\end{figure}

\begin{figure}[ht]
\begin{center}
\includegraphics[width=11cm]{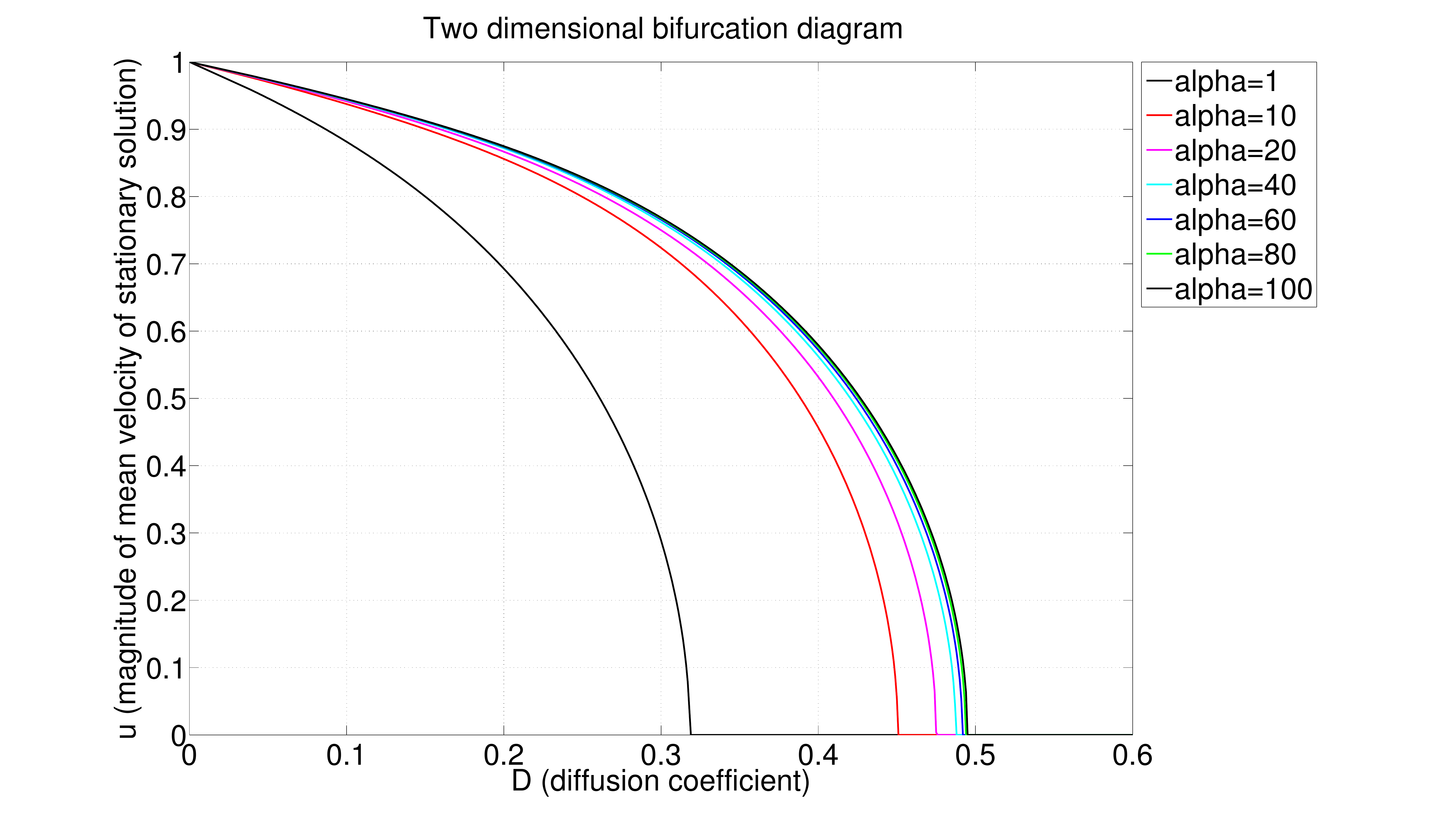}
\end{center}
\caption{Two-Dimensional Bifurcation Diagram: Here, the
  two-dimensional bifurcation curves are plotted for increasing values
  of $\alpha$. This figure indicate that there is a limiting
  bifurcation curve, and hence a limiting critical noise value
  approaching 1/2, as $\alpha \to \infty$.}
\label{F:bifurcation2D}
\end{figure}

It is interesting to note that the changes in the bifurcation curve
lessen as $\alpha$ increases, see figure \ref{F:bifurcation2D} in the
two-dimensional case.  This indicates that the bifurcation curves are
approaching a limiting function as $\alpha \to \infty$. Letting
$\alpha\to \infty$ means that the cruise speed term dominates the
behavior of the system; intuitively speaking, as $\alpha \to \infty$,
we recover a norm constraint in the velocity for particles. This
intuition was in part rigorously proved by Bostan and Carrillo in
\cite{bostan2013asymptotic}, where they show that the kinetic equation
\eqref{E:main} limits to the continuum Vicsek model in
\cite{dfl2013}. Here, we numerically conclude that a limiting phase
transition curve does indeed exist in two dimensions and it
qualitatively matches the one obtained in
\cite{frouvelle2012dynamics,dfl2013}. In fact, the critical noise
value $D_c$ is converging towards the critical noise value $1/2$ for
the Vicsek model obtained in \cite[Proposition
3.3]{frouvelle2012dynamics}.

\subsection{Stability and Phase Transition}\label{S:stability}

In order to numerically explore the stability of the stationary
solutions, we approximate the solutions to \eqref{E:main} by a Monte
Carlo-like method using the Euler-Maruyama numerical scheme.  We use
$10000$ particles per run with a timestep of $0.01$ and evaluate the
average velocity at time $6000$. This is enough for stabilization in
time of the solutions except for noise values close to the critical
noise parameter $D_c$, which are well known to take longer to
converge. In the particle simulations, we initialize the particles
with velocities sampled from the Gaussian $\mathcal{N}(0.5,1)$ in
order to investigate which of the stationary solutions is stable.  In
figure \ref{F:stability}, we plot this final average velocity over an
ensemble of ten runs on top of three of the bifurcation curves studied
in Figure \ref{F:dHduOverlap1D}.  From the figure, it is clear that
the particle simulations, initialized away from either stationary
state, approach the nonzero stationary solution while such a solution
exists.  As expected, this indicates that, when it exists, the nonzero
stationary solution is stable.  This demonstrates that the zero
stationary solution is unstable before the critical value of the noise
and stable afterward, indicating the bifurcation we observe is indeed
a phase transition.

\begin{figure}[ht]
\begin{center}
\includegraphics[width=10cm]{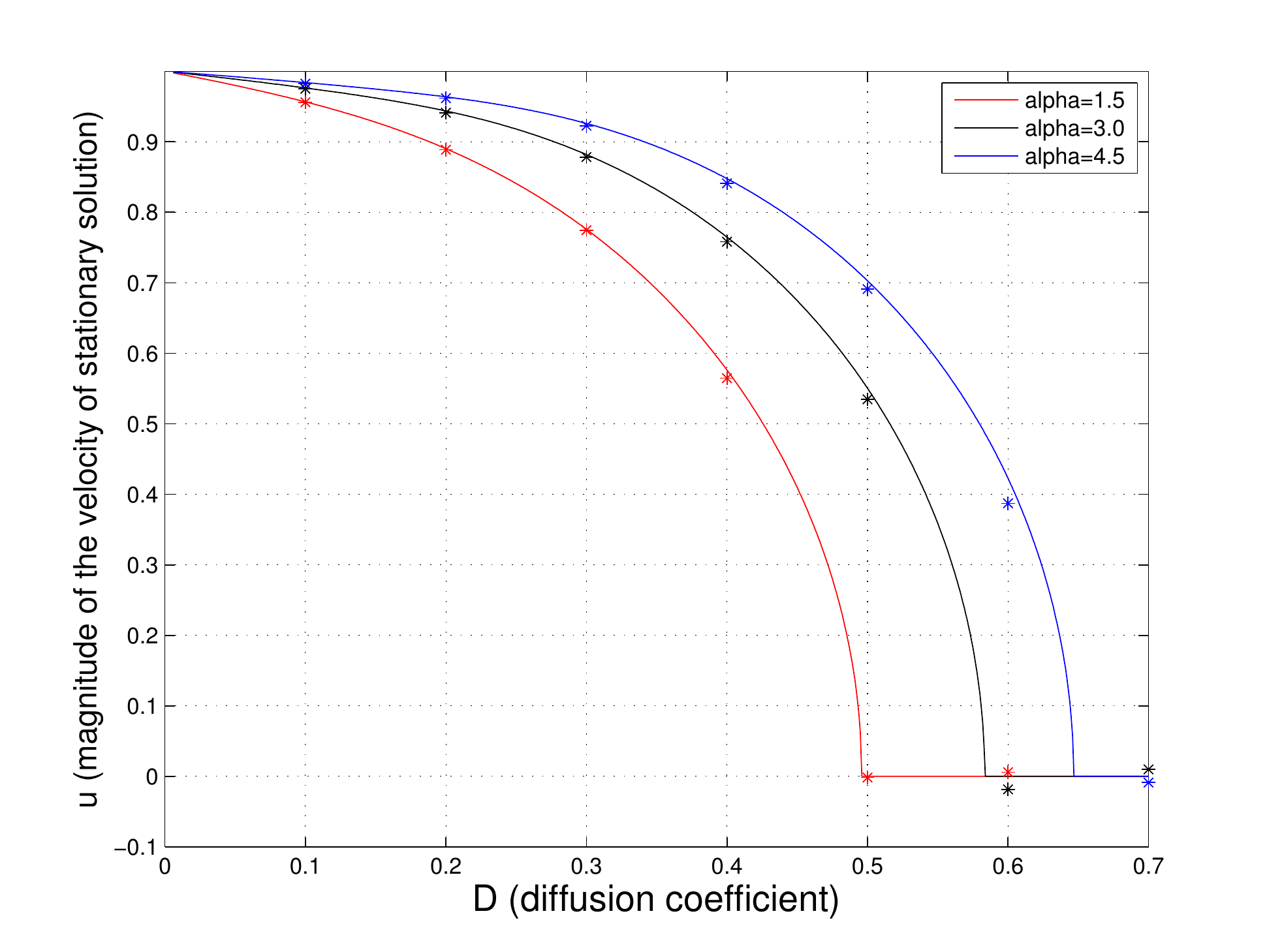}
\end{center}
\caption{One Dimension: This plot demonstrates the stability of the
  nonzero stationary solution for $\alpha=1.5, 3.0, \mbox{ and } 4.5$.
  Here, the average velocity over an ensemble of ten runs of the
  microscopic model is plotted (asterisks) over bifurcation curves
  (solid line) similar to those in figure \ref{F:dHduOverlap1D}. The
  particles were initialized with velocity sampled from
  $\mathcal{N}(0.5,1)$.  It is clear from this plot that the average
  velocity of the microscopic runs agrees with the nonzero stationary
  solution, indicating that this is the stable stationary state as
  long as it exists. }
\label{F:stability}
\end{figure}

\subsection{Stationary Solutions}

One validation of the efficacy of the numerical method with the
particles is whether we are able to recover the stationary solutions
for different values of $D$.  In Figure \ref{F:histAgainstTruth}, the
dots show the final histograms at time $500$ of an ensemble of $100$
runs in one dimension with $\alpha = 2$ and varying $D$.  In solid
lines, we plot the solution $f_{u(D)}$ given by equation
\eqref{stateq}, taking $u(D)$ to be the values computed in subsection
\ref{S:bifurcations}. We observe an impressive match between the
histograms and the analytical distribution, further validating the
Monte Carlo-like approach to the solutions.

\begin{figure}
\begin{center}
\includegraphics[width=16cm]{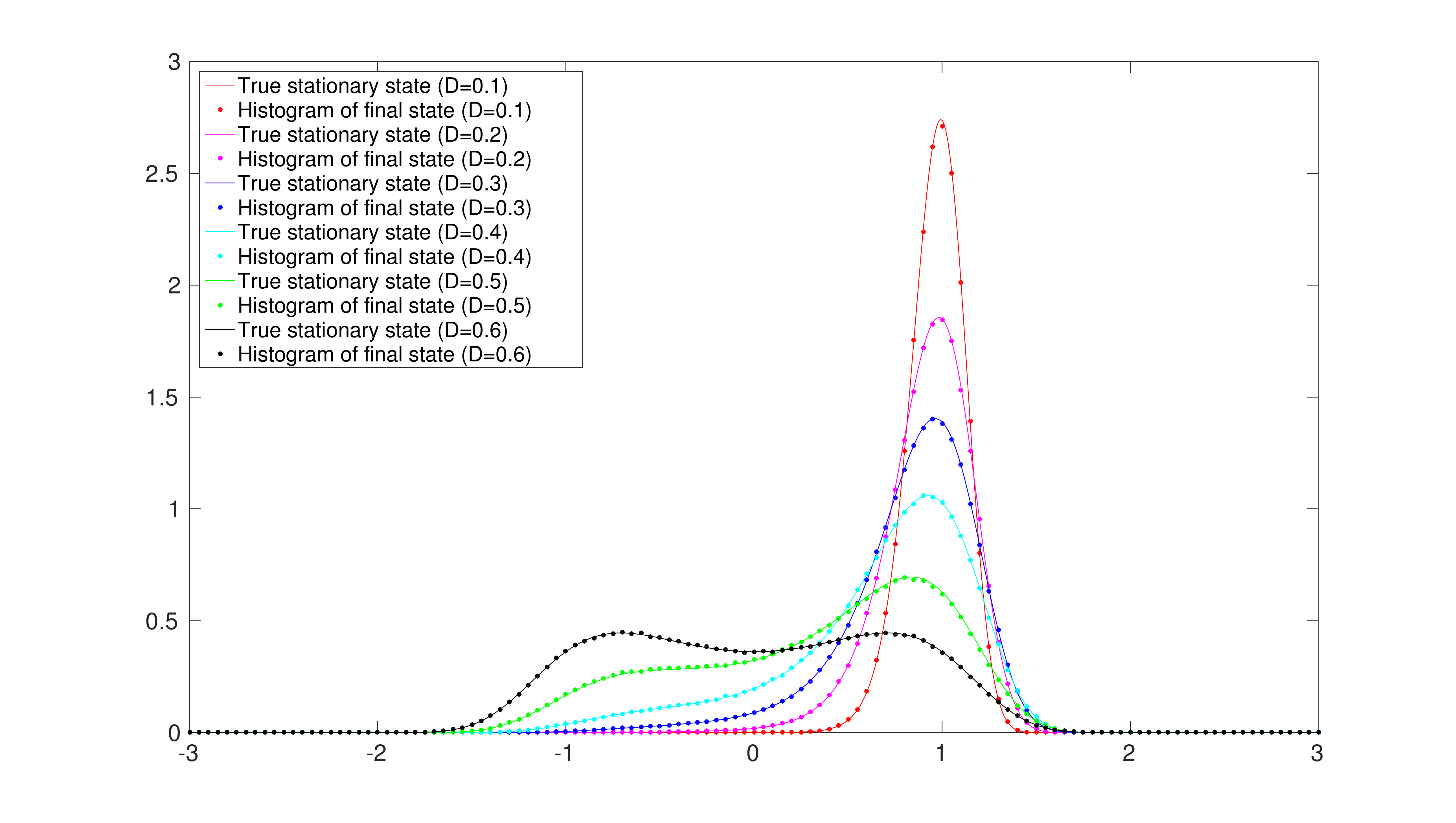}
\end{center}
\caption{One Dimension: points show the final
  velocity profiles at time $500$ from our particle simulation with
  $\alpha = 2$.  The solid lines are the corresponding stationary
  solutions $f_{u(D)}$, computed by substituting the stationary
  average velocity $u(D)$ from the bifurcation diagram shown in Figure
  \ref{F:dHduOverlap1D} into equation \eqref{stateq}.}
\label{F:histAgainstTruth}
\end{figure}

\begin{figure}
 \begin{center}
 \includegraphics[width=8cm]{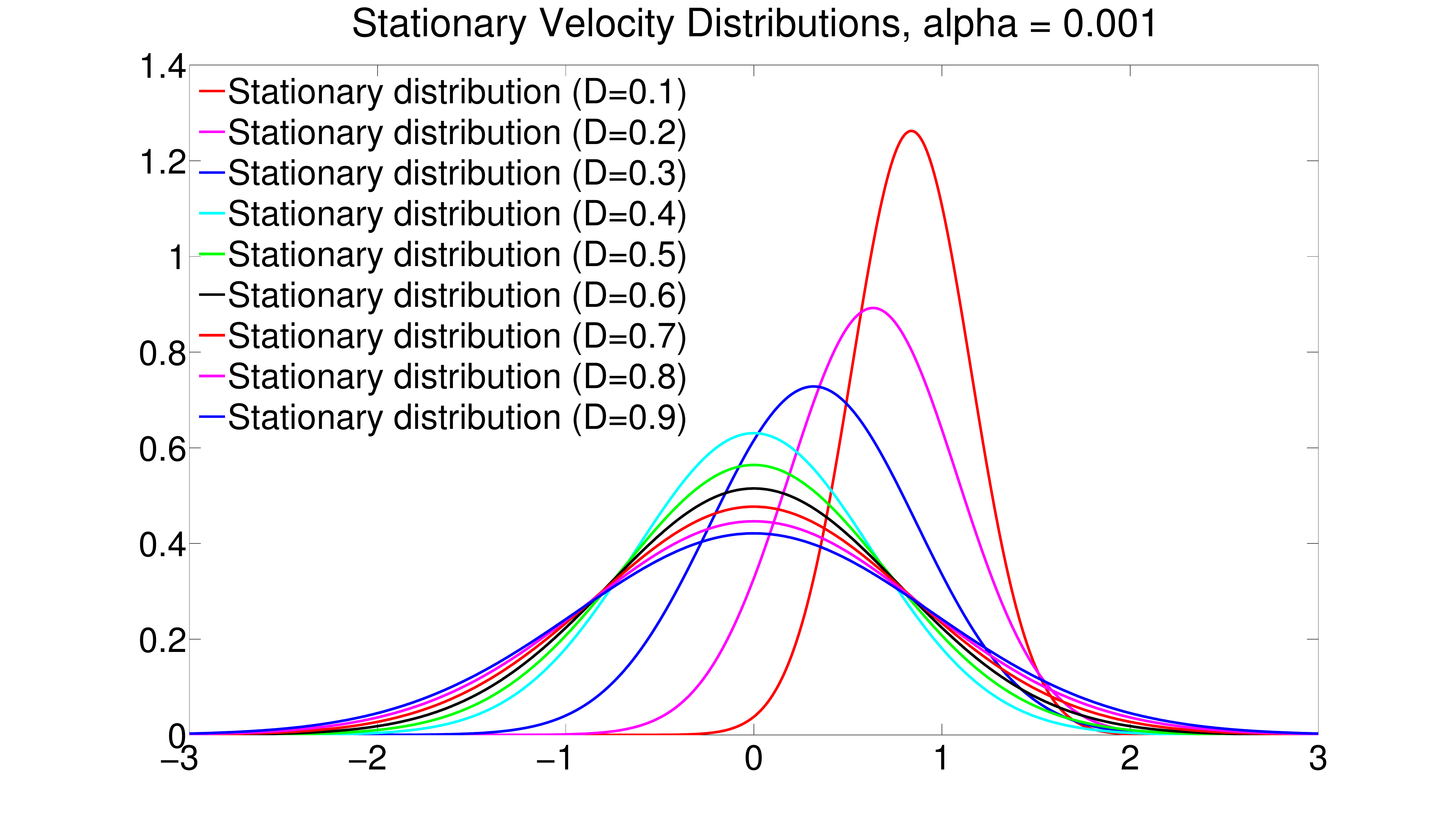}
 \includegraphics[width=8cm]{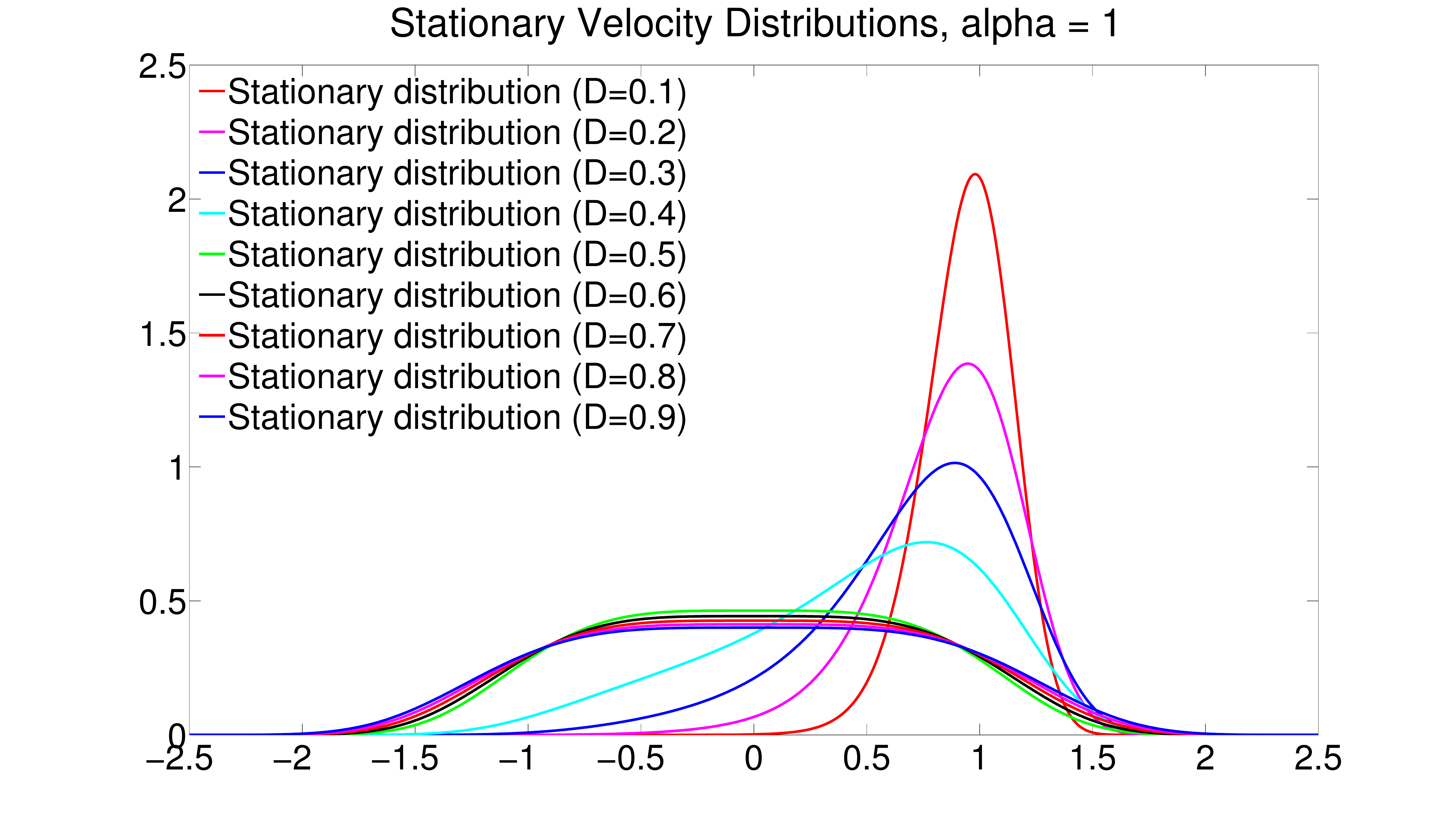}\\
 \includegraphics[width=8cm]{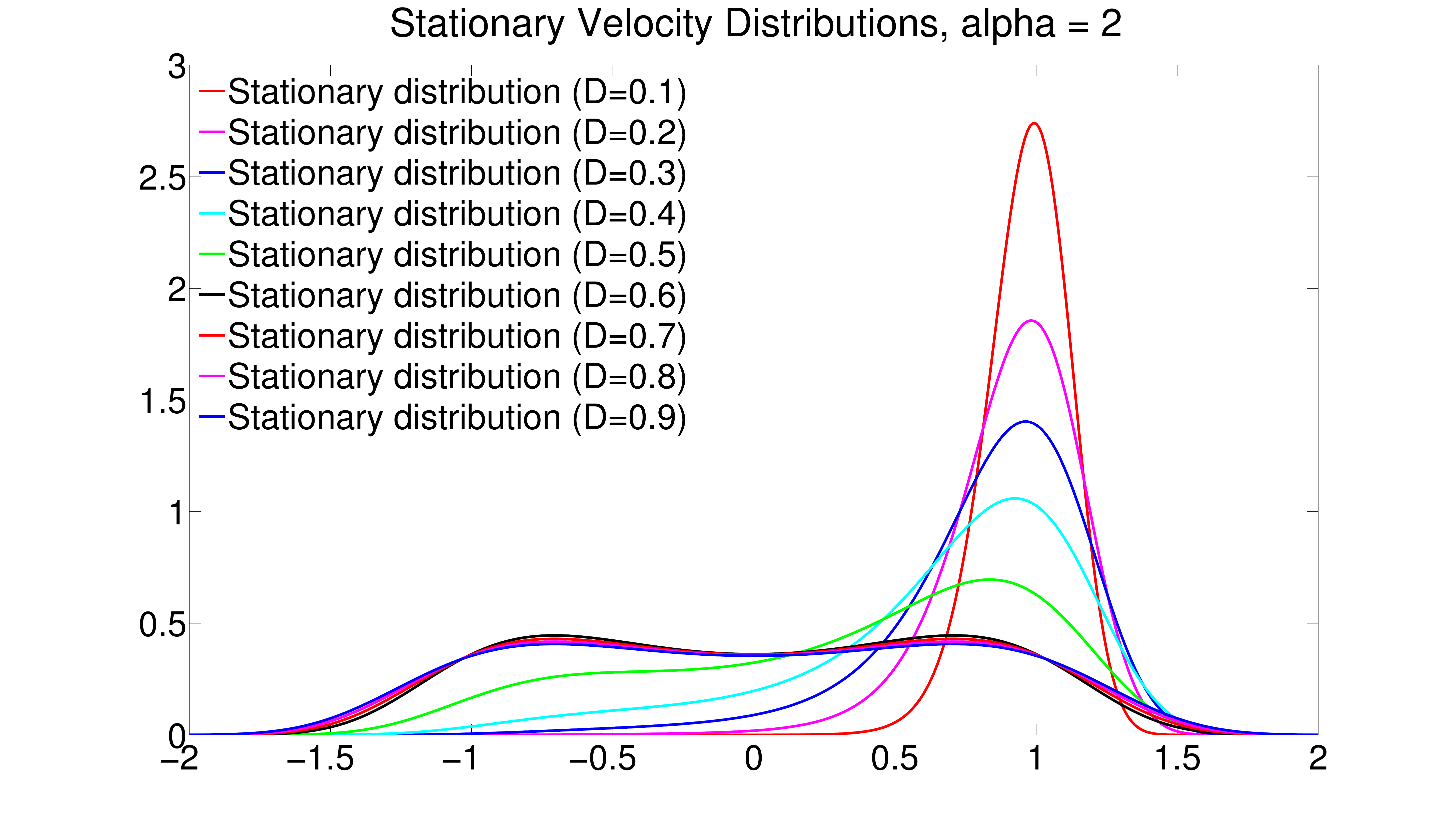}  
 \includegraphics[width=8cm]{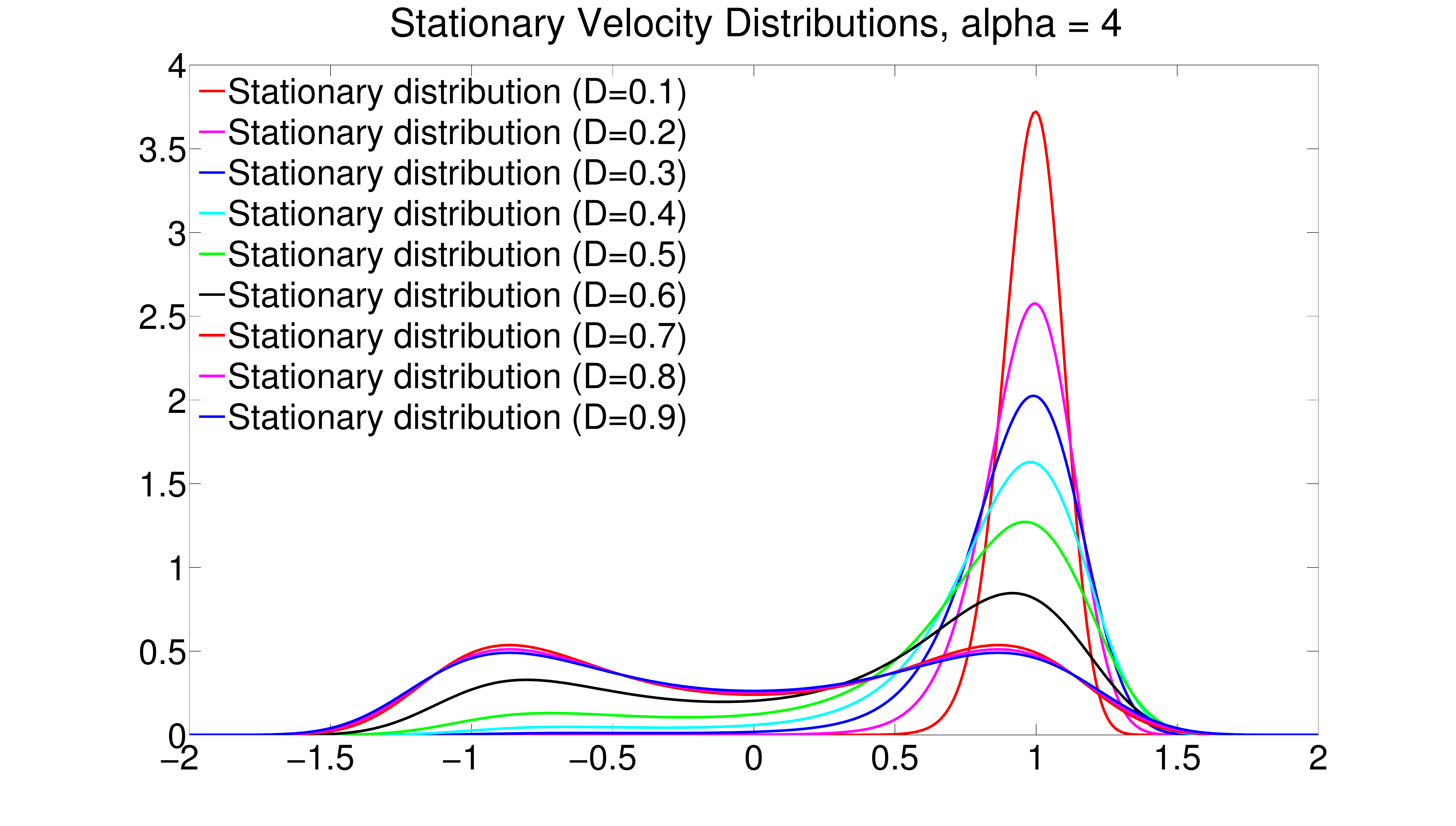}\\
 \includegraphics[width=8cm]{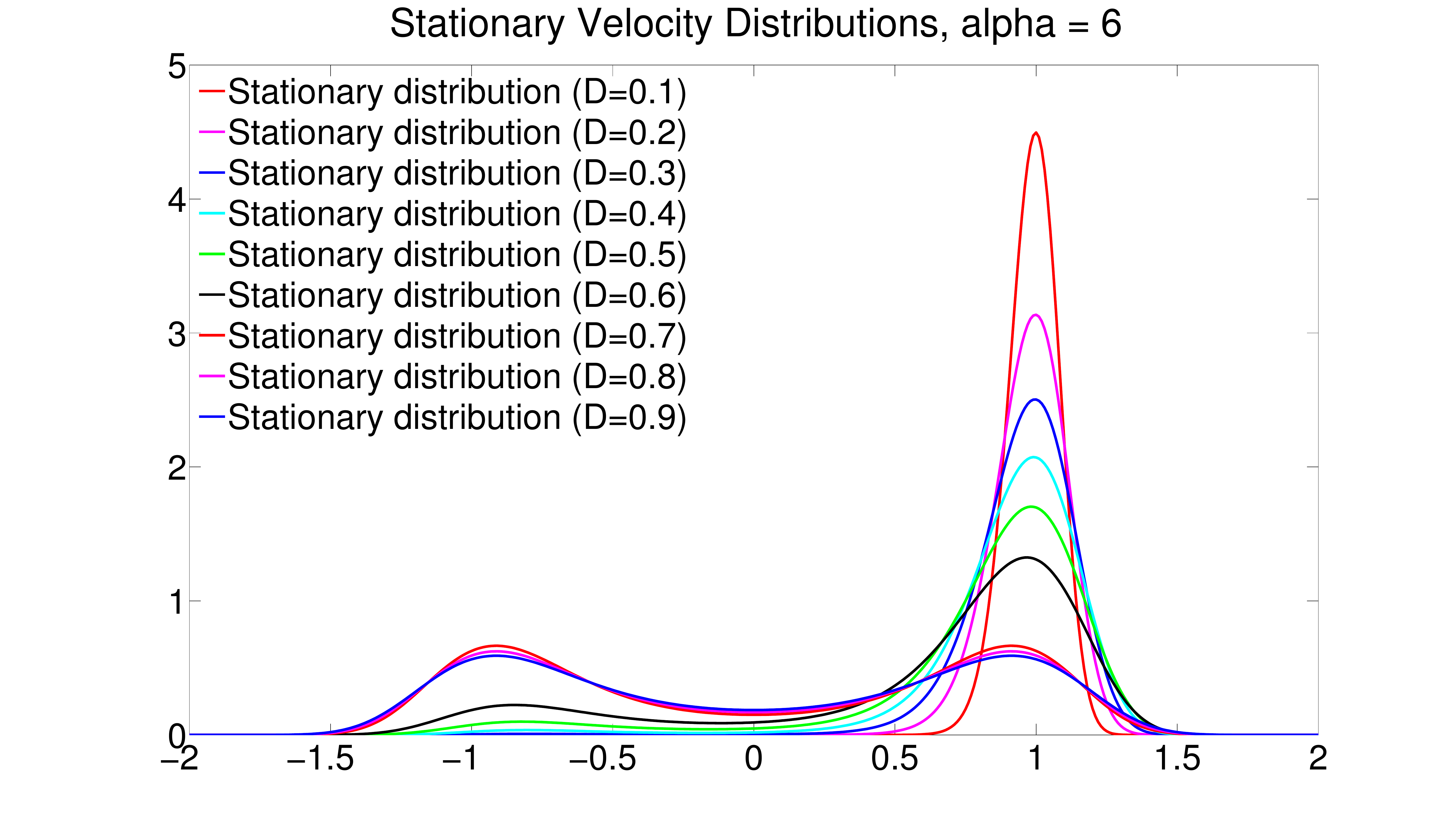}
 \end{center}
 \caption{Stationary distributions for various $\alpha$ and $D$ in the
   one-dimensional case.  In all of the figures, we plot the
   stationary distributions for several $D$ values; from left to
   right, we fix $\alpha$ to be $0.001$, $1$, $2$, $4$, and $6$,
   respectively.  The stationary distributions are computed by
   substituting the average velocity which we found numerically into
   the formula for the stationary distribution \eqref{stateq}.}
 \label{F:stationaryDists_alpha}
 \end{figure}

 With the formula for the steady-state and the stationary average
 velocities that we found numerically, we are able to consider also
 the shape of the stationary distributions in velocity space as
 $\alpha$ changes.  The results shown in Figure
 \ref{F:stationaryDists_alpha} are for the one-dimensional case,
 though this could easily be done for the two-dimensional case, as
 well.  We observe that the stationary distribution for $D$ beyond the
 critical threshold $D_c$ can take forms which are not Gaussian around
 $0$.  In fact, they can be double-peaked, with the peaks at $v=-1$
 and $v=1$.  We can see from Figure \ref{F:stationaryDists_alpha} that
 the preference for velocities of norm $1$ is not strong enough to
 create a double-peaked distribution for the case of small $\alpha$,
 but as $\alpha$ grows, there is a definitive preference for
 velocities with norm 1. Beyond the critical noise threshold, double
 peaks at velocities with norm one are clearly apparent even in cases
 in which $u(D)= 0$.

\subsection{Free energy and average velocity in 1D}

The particle simulations give us insight beyond indicating the
stability of the stationary states.  Using an ensemble of $100$ runs,
each with $10000$ particles and a time step of $0.001$, we are able to
construct a histogram approximating the velocity profile $f$ in one
dimension.  We used this to calculate the average velocity over time
and to calculate the free energy given in equation \eqref{freeen} and
plot it over the course of the simulation.  In Figure \ref{F:aveVel},
we show the average velocity over time of the particles in the
ensemble of runs. We can see that the average velocity initially dips
for all of the values of $D$, presumably as the particles align, and
then more slowly approaches the velocity that we know from figure
\ref{F:stability} to be the stationary value of the average velocity.

In Figure \ref{F:entropy}, we plot the evolution of the free energy
from the histograms of the particles.  We again see an initial swift
decline in the free energy, followed by a gradual decay of the free
energy for small values of $D$, as is expected. For larger values of
$D$, after this initial steep decay of the free energy we notice a
gradual \emph{increase}. Note that this particular method is not
designed to preserve the decay of the free energy. In fact, we observe
that our method initially undershoots the asymptotic value of the free
energy for large noise since the limiting free energy value of the
stationary state is accurately computed in view of the results in
Figure \ref{F:histAgainstTruth}. Free-energy decreasing deterministic
methods \cite{CCH} could be used to investigate this issue further.

\begin{figure}
\begin{center}
\includegraphics[width=8cm]{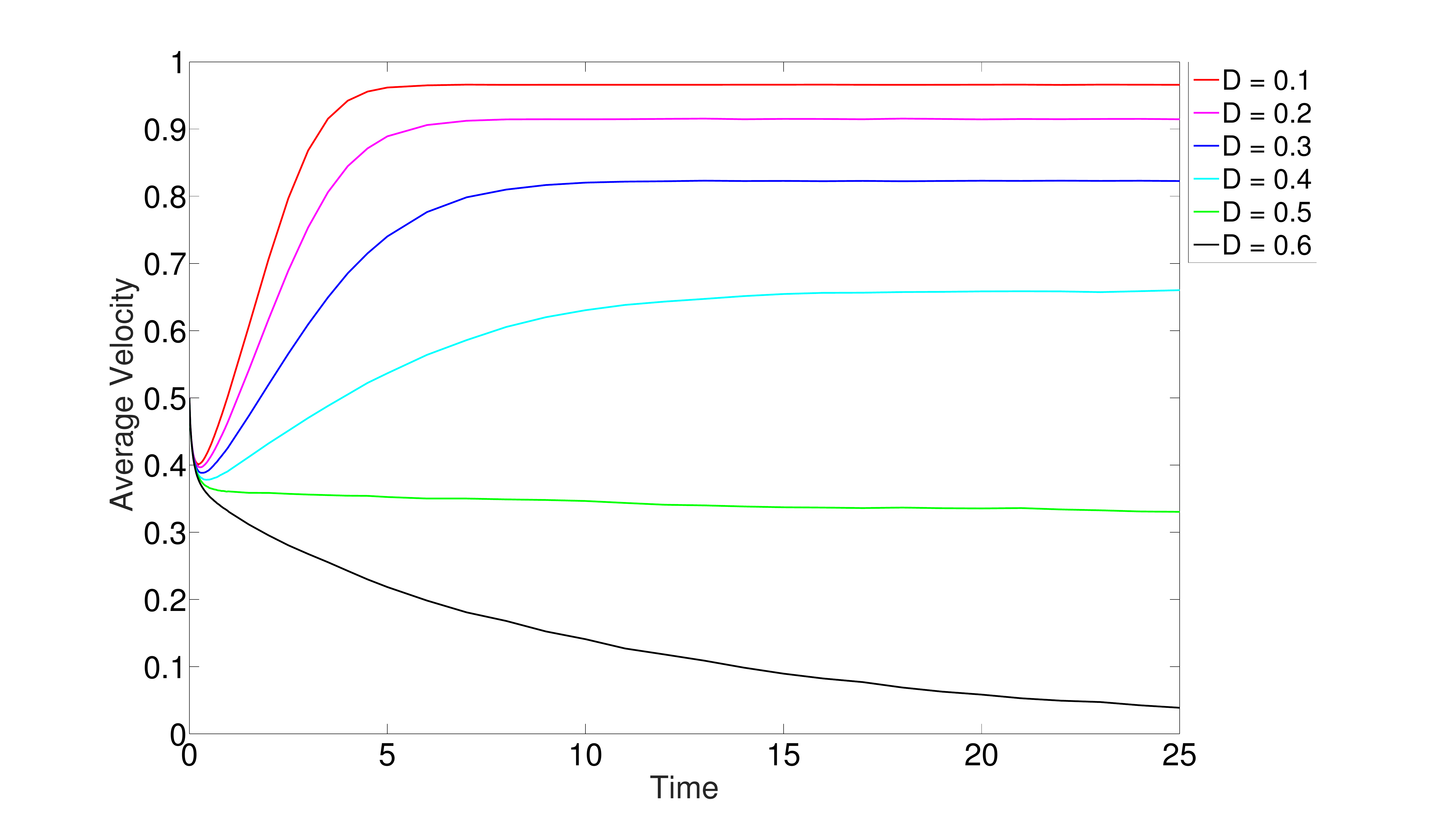}
\includegraphics[width=8cm]{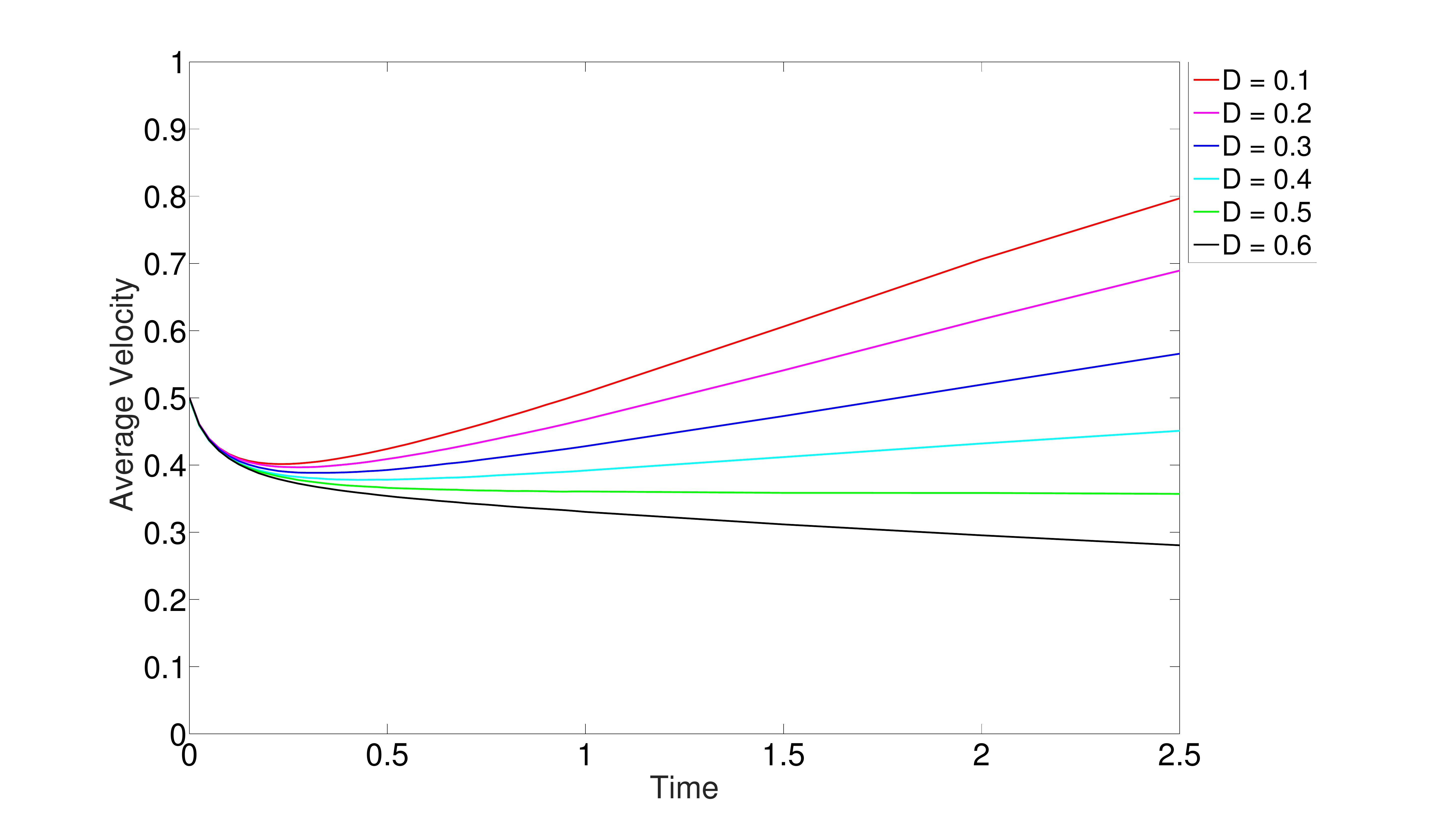}
\end{center}
\caption{One Dimension: The average velocity calculated for several values of $D$ with $\alpha = 2$. The left figure shows the evolution of the average velocity to time $25$, while the figure on the right focuses on the initial period from $t=0$ to $t=2.5$. Note the initial dip in average velocity for all values of $D$.}
\label{F:aveVel}
\end{figure}

\begin{figure}
\begin{center}
\includegraphics[width=8cm]{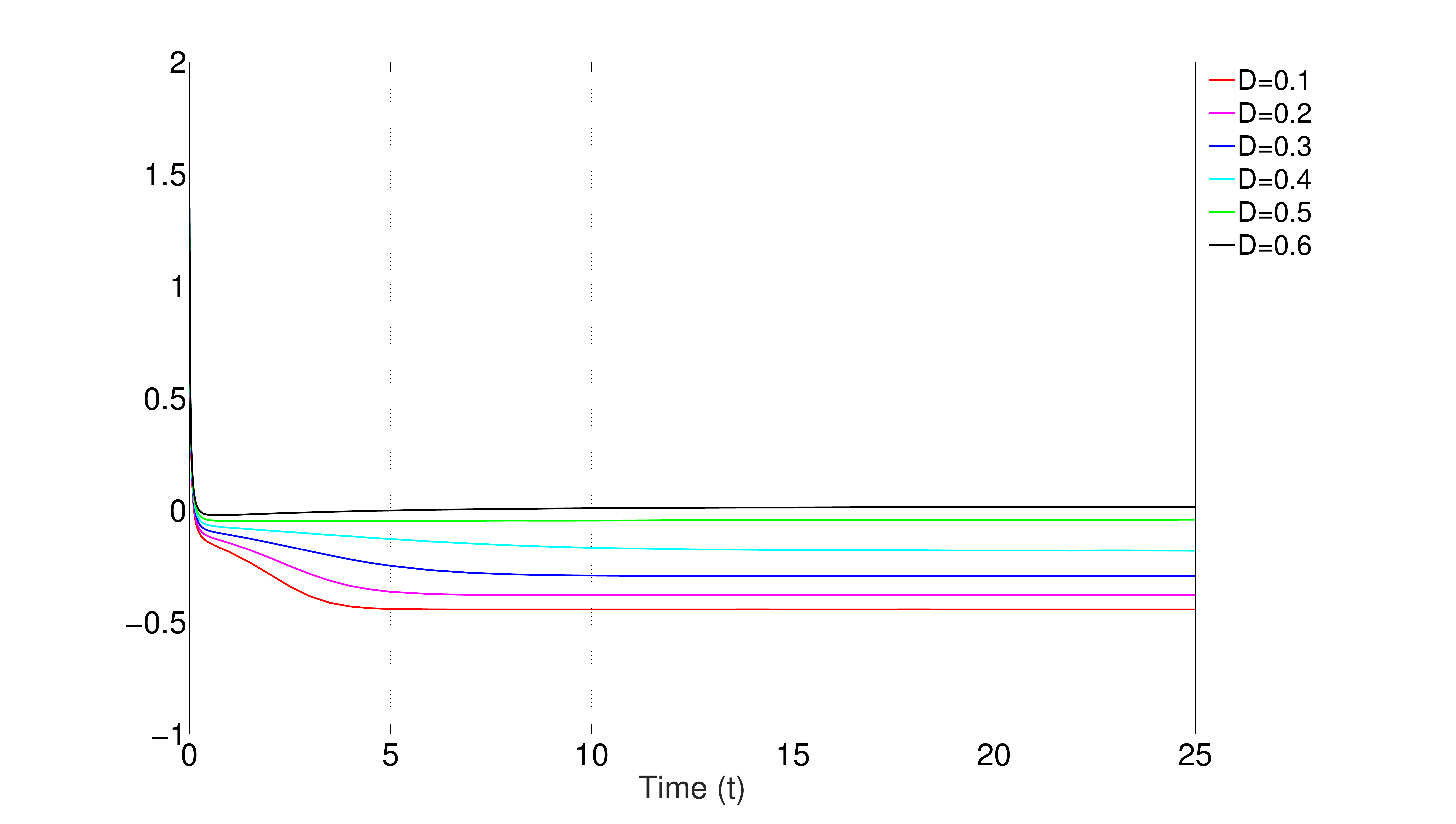}
\includegraphics[width=8cm]{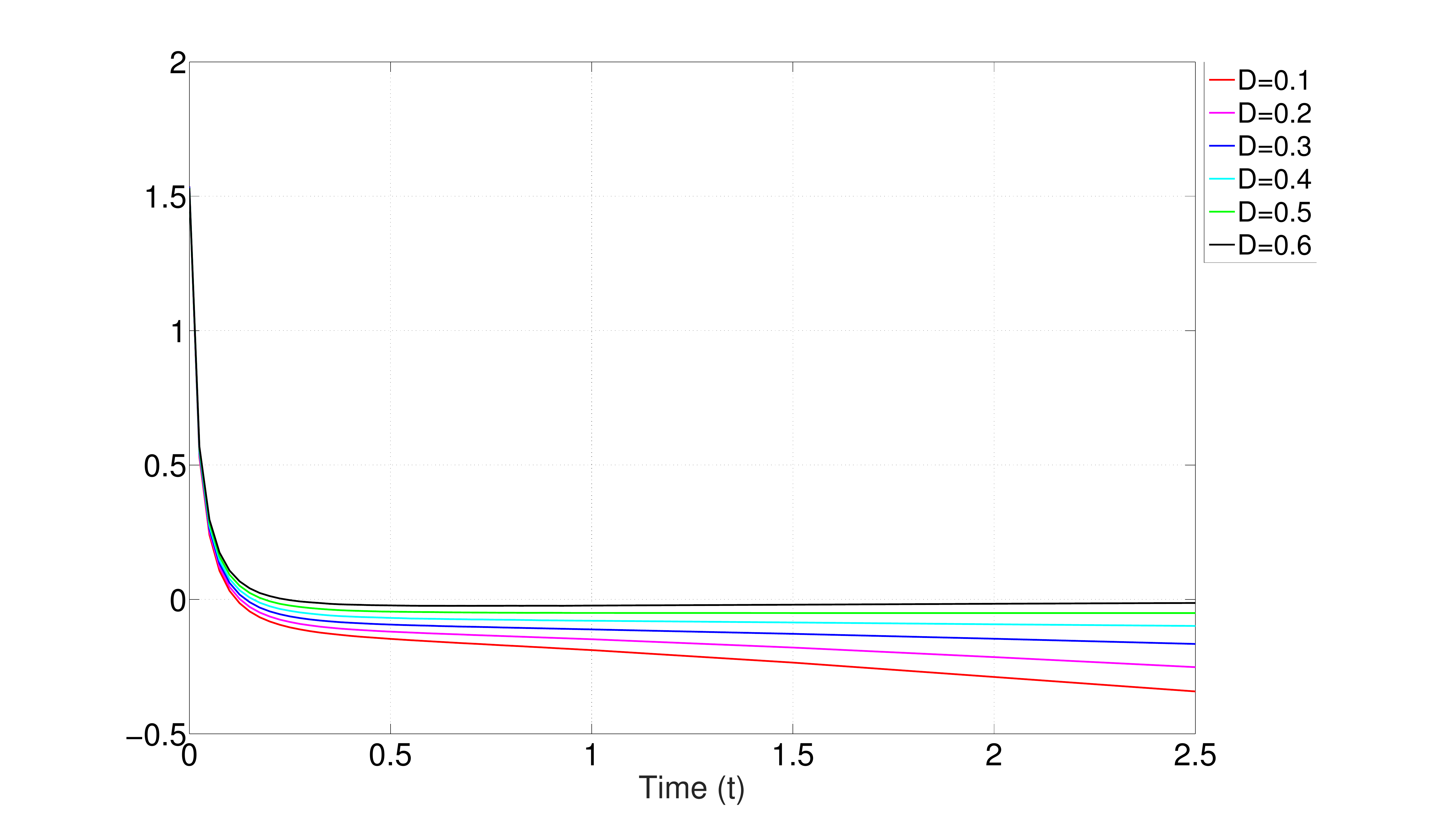}
\end{center}
\caption{Free Energy in One Dimension: free energy over time for
  varying values of $D$. The first plot shows the free energy up to
  time $25$, while the second shows the same plot but zoomed to focus
  on the initial period, where the free energy decreases rapidly for
  all values of $D$.}
\label{F:entropy}
\end{figure}


\section{Conclusion}\label{S:conclusion}

In this work we have studied the stationary solutions of a noisy
self-propelled Cucker-Smale model and proven than this model behaves
similarly as the Vicsek model, i.e. it exhibits a phase transition
when the noise intensity increases. For large noise intensity, we show
the existence of a single equilibrium with zero mean-velocity, showing
that no self-organized motion is possible. For small noise intensity,
we show the existence of a family of polarized equilibria parametrized
by a unit vector on the sphere. There are still a number of issues
left. For instance, so far, we cannot rule out the existence of other
families of equilibria for small noise, as our approach relies on a
local approach using the implicit function theorem. Also, several
phase transition points could exist with the emergence of other
branches of equilibria. Note that these circumstances are not
occurring in one dimension thanks to the results of Tugaut
\cite{Tugaut2013Convergence, Tugaut2013Phase,
  Tugaut2013Selfstabilizing,Tugaut2014Selfstabilizing}. Another
direction is to take advantage of the polarized equilibria to develop
a hydrodynamic model in a similar spirit as \cite{dm2008} for the
Vicsek model. Here again, we expect that the absence of analytical
formulas for the mean velocity of the equilibria will generate
additional difficulties. Numerically, it could be interesting, but
challenging to compare the dynamics resulting from the Vicsek model
and the self-propelled Cucker-Smale model with high precision. The
assumption of constant speed in the Vicsek model has often been
disputed and this comparison would help determine what consequences
this assumption has on the results. A similar comparison could be made
at the level of the hydrodynamic models and would be equally useful.


\appendix

\section{Calculations on the asymptotic behavior of $H$ as $D \to 0$}
\label{sec:appendix}

We gather here some computations needed in order to calculate the
value of several limits of $H$ as $D \to 0$ used in Section
\ref{S:anyDimension}, such as
$\lim_{D \to 0} \frac{\partial H}{\partial D}(u,D)$ for $u=1$. We
follow the notation of Section \ref{S:anyDimension}.

The value of the following Gaussian integrals is used in the
calculations below:
\begin{equation*}
  \int_{\R} e^{-r^2} r^{2n} \d r = \sqrt{\pi} \frac{(2n-1)!!}{2^{n}}
  \qquad (n \in \NN),
\end{equation*}
from which we readily see that
\begin{gather*}
  \ird z_1^2 e^{-|z|^2} \d z = \frac12 \pi^{N/2},
  \qquad
  \ird z_1^2 z_2^2 e^{-|z|^2} \d z = \frac14 \pi^{N/2},
  \\
  \ird z_1^4 z_2^2 e^{-|z|^2} \d z = \frac38 \pi^{N/2},
  \quad
  \ird z_1^1 z_2^2 z_3^2 e^{-|z|^2} \d z = \frac18 \pi^{N/2},
  \\
  \ird z_1^4 e^{-|z|^2} \d z = \frac34 \pi^{N/2},
  \qquad
  \ird z_1^6 e^{-|z|^2} \d z = \frac{15}{8} \pi^{N/2}.
\end{gather*}
Using Lemma \ref{lem:Laplace-basic} we then obtain
\begin{subequations}
  \label{eq:mi}
  \begin{gather}
    m_2 := \ird z_1^2 e^{-\overline{Q}_1(z)} \,dz = (2 \pi)^{N/2}
    (1+2\alpha)^{-3/2},
    \\
    m_4 := \ird z_1^4 e^{-\overline{Q}_1(z)} \,dz
    = 3 (2 \pi)^{N/2} (1+2\alpha)^{-5/2},
    \\
    m_{22} := \ird z_1^2 z_2^2 e^{-\overline{Q}_1(z)} \,dz =
    (2 \pi)^{N/2} (1+2\alpha)^{-3/2},
    \\
    m_{222} := \ird z_1^2 z_2^2 z_3^2 e^{-\overline{Q}_1(z)} \,dz =
    (2 \pi)^{N/2} (1+2\alpha)^{-3/2},
    \\
    m_{24} := \ird z_1^2 z_2^4 e^{-\overline{Q}_1(z)} \,dz
    = 3 (2 \pi)^{N/2} (1+2\alpha)^{-3/2},
    \\
    m_{42} := \ird z_1^4 z_2^2 e^{-\overline{Q}_1(z)} \,dz
    =  3 (2 \pi)^{N/2} (1+2\alpha)^{-5/2},
    \\
    m_6 := \ird z_1^6 e^{-\overline{Q}_1(z)} \,dz = 15 (2 \pi)^{N/2}
    (1+2\alpha)^{-7/2},
  \end{gather}
\end{subequations}
where we recall that $\overline{Q}_1(z) = \frac12 |z|^2+\alpha z_1^2$
was defined in \eqref{eq:a0Q1}.

\paragraph{Value of $c_1(1)$.}

\begin{align*}
  c_1(1) &= -\alpha \ird z_1^2 |z|^2 e^{-\overline{Q}_1(z)} \,dz
  = -\alpha (m_4 + (N-1)m_{22})
  \\
& = -\alpha (2\pi)^{N/2} \left(
    3 (1+2\alpha)^{-5/2} + (N-1) (1+2\alpha)^{-3/2}
  \right)
  \\
  &=
  -\alpha (2\pi)^{N/2} (1+2\alpha)^{-5/2}
  \left(N + 2 + 2 (N-1) \alpha
  \right).
\end{align*}

\paragraph{Value of $k_1(1)$.}

We have
\begin{equation*}
  k_1(1) = \alpha \left(
    \ird z_1^2 |z|^2 e^{-\overline{Q}_1(z)} \d z
    - \ird z_1^2 |z|^2 \overline{Q}_1(z) e^{-\overline{Q}_1(z)} \d z
  \right)
  =: \alpha (I_1 - I_2).
\end{equation*}
We calculate these integrals separately using the values in \eqref{eq:mi}:
\begin{equation*}
  I_1 =
  \ird z_1^2 |z|^2 e^{-\overline{Q}_1(z)} \,dz
  = m_4 + (N-1) m_{22}
  = (2\pi)^{N/2} \left(
    3(1+2\alpha)^{-5/2} + (N-1)(1+2\alpha)^{-3/2}
  \right).
\end{equation*}
\begin{align*}
  I_2 &=
  \ird z_1^2 |z|^2 \overline{Q}_1(z) e^{-\overline{Q}_1(z)} \,dz
  \\
&  =
  \left(\frac12 + \alpha\right) m_6
  +(N-1)(1+\alpha) m_{42}
  + \frac{N-1}{2} m_{24}
  + \frac{(N-1)(N-2)}{2} m_{222}
  \\
  &=
  (2\pi)^{N/2}
  \left(
    \left( \frac{15}{2} + 3(N-1)(1+\alpha)
    \right)
    (1 + 2 \alpha)^{-5/2}
    + \frac{N^2-1}{2} (1 + 2 \alpha)^{-3/2}
  \right).
\end{align*}
Finally, we get
\begin{align*}
  \frac{1}{(2\pi)^{N/2} \alpha} k_1(1)
  &=
  \left( - \frac{9}{2} - 3(N-1)(1+\alpha)
  \right)
  (1 + 2 \alpha)^{-5/2}
  - \frac{(N-1)^2}{2}
  (1 + 2 \alpha)^{-3/2}
  \\
  &=
  (1 + 2 \alpha)^{-5/2}
  \left(
    - \frac{9}{2} - 3(N-1)(1+\alpha)
    - \frac{(N-1)^2}{2}
    (1 + 2 \alpha)
  \right).
\end{align*}

\paragraph{Value of $\partial H / \partial D$ as $D \to 0$.}

Using this with \eqref{eq:c01}, \eqref{eq:c11} and \eqref{eq:k11} we have
$$
  c_0(1) k_1(1) - c_1(1)k_2(1)
  =
  -\frac{(2\pi)^{N}\alpha}{(1+2\alpha)^3}
    \left(
           3+(N-1) (1 + 2 \alpha)
  \right).
$$


\section*{Acknowledgments}

A. Barbaro was supported by the NSF through grant
No. DMS-1319462. J.~A.~Ca\~nizo was supported by the Spanish project
MTM2014-52056-P and the Marie-Curie CIG grant KineticCF.
J.~A. Carrillo was partially supported by the project
MTM2011-27739-C04-02 DGI (Spain), from the Royal Society by a Wolfson
Research Merit Award and by the EPSRC grant EP/K008404/1. PD acknowledges
support from EPSRC under grant ref: EP/M006883/1, from the Royal Society
and the Wolfson Foundation through a Royal Society Wolfson Research
Merit Award. PD is on leave from CNRS, Institut de Math\'ematiques de
Toulouse, France.

\bibliographystyle{plain}
\bibliography{bibliography}

\end{document}